\theoremstyle{definition}
\theoremstyle{theorem}
\theoremstyle{corollary}
\newtheorem{theorem}{Theorem}[section]
\newtheorem{lemma}[theorem]{Lemma}
\newtheorem{proposition}[theorem]{Proposition}
\theoremstyle{corollary}
\newtheorem{corollary}[theorem]{Corollary}
\theoremstyle{definition}
\newtheorem{definition}[theorem]{Definition}
\newtheorem{example}[theorem]{Example}
\newtheorem{remark}[theorem]{Remark}
\numberwithin{equation}{section}
\newcommand\R{\mathbb{R}}
\newcommand\Q{\mathbb{Q}}
\newcommand\Z{\mathbb{Z}}
\renewcommand\P{\mathbb{P}}
\newcommand\eps{\varepsilon}
\newcommand{\suchthat}{\;\ifnum\currentgrouptype=16 \middle\fi|\;}
\DeclareMathOperator{\ord}{ord}
\DeclareMathOperator{\mult}{mult}
\DeclareMathOperator{\Supp}{Supp}
\DeclareMathOperator{\vol}{vol}
\DeclareMathOperator{\mass}{mass}
\newcommand{\bm}{\mathbf B_-}  
\newcommand{\bp}{\mathbf B_+}  
\newcommand{\okbd}{\Delta}
\DeclareMathOperator{\cha}{char}
\begin{document}

\title[Seshadri constants and Okounkov bodies revisited]{Seshadri constants and Okounkov bodies revisited}

\author{Jinhyung Park}
\address{Department of Mathematics, Sogang University, Seoul, Korea}
\email{parkjh13@sogang.ac.kr}

\author{Jaesun Shin}
\address{Department of Mathematical Sciences, KAIST, Daejeon, Korea}
\email{jsshin1991@kaist.ac.kr}

\subjclass[2010]{14C20}
\date{\today}
\keywords{Seshadri constant, Okounkov body, big divisor, filtered graded linear series}

\begin{abstract}
In recent years, the interaction between the local positivity of divisors and Okounkov bodies has attracted considerable attention, and there have been attempts to find a satisfactory theory of positivity of divisors in terms of convex geometry of Okounkov bodies. Many interesting results in this direction have been established by Choi--Hyun--Park--Won \cite{CHPW} and K\"{u}ronya--Lozovanu \cite{KL1, KL2, KL3} separately. The first aim of this paper is to give uniform proofs of these results. Our approach provides not only a simple new outlook on the theory but also proofs for positive characteristic in the most important cases. Furthermore, we extend the theorems on Seshadri constants to graded linear series setting. Finally, we introduce the integrated volume function to investigate the relation between Seshadri constants and filtered Okounkov bodies introduced by Boucksom--Chen \cite{BC}.
\end{abstract}

\maketitle


\section{Introduction}

Throughout the paper, we work over an algebraically closed field $\Bbbk$ of arbitrary characteristic unless otherwise stated.
Let $f \colon Y \to X$ be a birational morphism between smooth projective varieties of dimension $n$, and $V_\bullet$ be a graded linear series associated to a divisor $D$ on $X$. Fix an \emph{admissible flag} on $Y$:
$$
Y_\bullet: Y=Y_0\supseteq Y_1\supseteq \cdots\supseteq Y_{n-1}\supseteq Y_n=\{ y\}
$$
where each $Y_i$ is an irreducible subvariety of codimension $i$ in $Y$ and is smooth at the point $y$.
With this data, we can associate a convex set in Euclidean space
$$
\okbd_{Y_\bullet}(f^*V_\bullet) \subseteq \R_{\geq 0}^n.
$$
When $V_\bullet$ is the complete graded linear series of $D$, we put $\okbd_{Y_\bullet}(f^*D)=\okbd_{Y_\bullet}(f^*V_\bullet)$.
Based on ideas of Okounkov \cite{O1, O2}, this construction was introduced in all its generality by Kaveh--Khovanskii \cite{KK} and Lazarsfeld--Musta\c{t}\u{a} \cite{LM}. We will focus mainly on two examples. The first one is when $f$ is the identity. In this case, we call $\okbd_{Y_\bullet}(V_\bullet)$ the \emph{Okounkov body} of $V_\bullet$. The other one is when $f$ is the blow-up $\pi \colon \widetilde{X} \to X$ of $X$ at a point $x$ and $Y_\bullet$ is an infinitesimal admissible flag, in which the last $n-1$ elements are linear subspaces of the exceptional divisor $E \simeq \P^{n-1}$ of $\pi$. In this case, we call $\okbd_{Y_\bullet}(\pi^*V_\bullet)$ the \emph{infinitesimal Okounkov body} of $V_\bullet$ over $x$. 

In recent years, a considerable amount of research has been devoted to the study of the connection between local positivity of divisors and Okounkov bodies. This direction of research was first tackled in the surface case in \cite{KL3}. For higher dimensions, \cite{KL1} deals with the infinitesimal setting, and the local picture is completed in \cite{CHPW} with partial results in \cite{KL2}. We further refer to \cite{CPW, CPW2, CPW3}, \cite{DKMS}, \cite{I},  \cite{R} for related results.

It has been clear by the works  \cite{CHPW}, \cite{KL1, KL2, KL3} that standard simplices arise naturally in  Okounkov bodies.
Let $\textbf{e}_1, \ldots, \textbf{e}_n$ be the standard basis vectors for $\R^n$, and $\mathbf{0}$ be the origin of $\R^n$. For $\xi \geq 0$, set
$$
\begin{array}{l}
\blacktriangle_{\xi}^n:= \text{closed convex hull $\big( \mathbf{0}, \xi \textbf{e}_1, \ldots, \xi \textbf{e}_n  \big)$}\\
\widetilde{\blacktriangle}_{\xi}^n :=\text{closed convex hull $\big( \mathbf{0}, \xi \textbf{e}_1, \xi (\textbf{e}_1+\textbf{e}_2), \ldots, \xi (\textbf{e}_1+\textbf{e}_n) \big)$}.
\end{array}
$$
We call $\blacktriangle_{\xi}^n$ (resp. $\widetilde{\blacktriangle}_{\widetilde{\xi}}^n$) the \emph{standard simplex} (resp. \emph{inverted standard simplex}) of size $\xi$.

In this paper, we prove the following ampleness criterion in terms of Okounkov bodies, which may be regarded as an analogue result of Seshadri's ampleness criterion (cf. \cite[Theorem 1.4.13]{L}).

\begin{theorem}\label{main1}
Let $X$ be a smooth projective variety of dimension $n$, and $D$ be a big $\R$-divisor on $X$. Then the following are equivalent:
\begin{enumerate}[wide, labelindent=0pt]
 \item[$(1)$] $D$ is ample.
 \item[$(2)$] For every point $x \in X$, there is an admissible flag $Y_\bullet$ centered at $x$ such that $\okbd_{Y_\bullet}(D)$ contains a nontrivial standard simplex in $\R_{\geq 0}^n$.
 \item[$(3)$] For every point $x \in X$, there is an infinitesimal admissible flag $\widetilde{Y}_\bullet$ over $x$ such that $\widetilde{\okbd}_{\widetilde{Y}_\bullet}(D)$ contains a nontrivial inverted standard simplex in $\R_{\geq 0}^n$.
\end{enumerate}
\end{theorem}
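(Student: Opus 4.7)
The plan is to deduce the theorem from Seshadri's classical ampleness criterion---$D$ is ample if and only if $\varepsilon(D;x) > 0$ at every point $x$---by setting up a pointwise dictionary between the existence of a nontrivial (inverted) standard simplex in the (infinitesimal) Okounkov body at $x$ and the positivity of $\varepsilon(D;x)$. Since $(2)$ and $(3)$ are \emph{existence} statements over admissible flags, for the direction $(1) \Rightarrow (2), (3)$ I only need to exhibit one good flag at each point, while in the converse I must extract numerical positivity at $x$ from a single witnessing flag.

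\textbf{Step 1: the infinitesimal equivalence $(1) \Leftrightarrow (3)$.} I would handle this first as the cleaner direction. Let $\pi\colon \widetilde{X} \to X$ be the blow-up at $x$ with exceptional divisor $E$, and recall $\varepsilon(D;x) = \sup\{\xi \geq 0 : \pi^*D - \xi E \text{ is nef}\}$. Choose a generic infinitesimal flag $\widetilde{Y}_\bullet$ whose last $n-1$ members form a flag of linear subspaces in $E \simeq \mathbb{P}^{n-1}$. For any $\xi < \varepsilon(D;x)$, the nefness of $\pi^*D - \xi E$ combined with ampleness of $D$ produces, for $m$ sufficiently divisible, sections of $m\pi^*D$ whose valuations along $\widetilde{Y}_\bullet$ fill out $m\widetilde{\blacktriangle}_\xi^n$; the hyperplane slice at level $t_1 = m\xi'$ is supplied by the ample restriction $(\pi^*D - \xi' E)|_E$ for $\xi' \leq \xi$. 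Passing to the limit gives $\widetilde{\blacktriangle}_\xi^n \subseteq \widetilde{\okbd}_{\widetilde{Y}_\bullet}(D)$. Conversely, the containment $\widetilde{\blacktriangle}_\xi^n \subseteq \widetilde{\okbd}(D)$ supplies, for every $\xi' < \xi$ and $m \gg 0$, sections of $m(\pi^*D - \xi' E)$, so $\pi^*D - \xi' E$ is pseudo-effective; together with bigness of $D$ this upgrades to $\varepsilon(D;x) > 0$ at every $x$, hence ampleness by Seshadri's criterion.

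\textbf{Step 2 and main obstacle.} For $(1) \Rightarrow (2)$, ampleness lets me build a flag inductively via Bertini: $Y_i \subseteq Y_{i-1}$ is a smooth divisor through $x$ cut out by a section of a sufficiently positive restricted linear series. Each restriction remains ample, so the iterative description of $\okbd_{Y_\bullet}(D)$ via successive restrictions to the $Y_i$ produces a nontrivial standard simplex whose size is controlled by the multiplicities used in the Bertini step. The hardest direction is $(2) \Rightarrow (1)$, since the regular Okounkov body is highly flag-sensitive and does not see the point $x$ as transparently as the infinitesimal body. My preferred route is to translate the simplex containment into sections of $mD$ with prescribed orders of vanishing along $Y_1, \ldots, Y_n$, pull them back to the blow-up $\widetilde{X}$, and verify that they witness an inverted simplex in the infinitesimal body, thereby reducing to Step 1. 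This detour keeps the argument entirely at the level of sections and valuations, with no appeal to multiplier ideals or Kawamata--Viehweg vanishing, so it should remain valid in positive characteristic as the paper advertises.
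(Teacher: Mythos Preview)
Your main gap is in Step~1, the direction $(3)\Rightarrow(1)$. From the containment $\widetilde{\blacktriangle}_\xi^n\subseteq\widetilde{\okbd}_{\widetilde{Y}_\bullet}(D)$ you only extract sections of $m(\pi^*D-\xi'E)$, i.e.\ pseudo-effectivity of $\pi^*D-\xi'E$. That is the Nakayama constant $\mu(D;x)\geq\xi$, not the Seshadri constant. The sentence ``together with bigness of $D$ this upgrades to $\varepsilon(D;x)>0$'' is not justified and is the entire content of the theorem: for a merely big $D$ one can have $\mu(D;x)>0$ at every $x$ while $\varepsilon(\|D\|;x)=0$ at points of $\bp(D)$. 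You are using only the first coordinate of the simplex and throwing away the rest of its shape.

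The paper closes this gap in two stages. First, the origin lying in $\widetilde{\okbd}_{\widetilde{Y}_\bullet}(D)=\okbd_{\widetilde{Y}_\bullet}(\pi^*D)$ forces the flag center $x'\in E$ out of $\bm(\pi^*D)$ (Lemma~\ref{lemma:origin}), hence $E\not\subseteq\bm(\pi^*D)$, hence $x\notin\bm(D)$; doing this for every $x$ shows $D$ is nef. Second, once $D$ is nef and big, if $\varepsilon(D;x)<k<\widetilde{\xi}$ then $\pi^*D-kE$ is not nef, so there is a curve $\overline{C}\subseteq\bm(\pi^*D-kE)$ meeting $E$ (Lemma~\ref{lemma:curve}); choosing the infinitesimal flag centered at a point of $\overline{C}\cap E$, Lemma~\ref{lemma:origin} again says $\mathbf{0}\notin\okbd_{\widetilde{Y}_\bullet}(\pi^*D-kE)$, contradicting the fact that the inverted simplex sits inside $\widetilde{\okbd}_{\widetilde{Y}_\bullet}(D)$ up to height $\widetilde{\xi}>k$. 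Both stages genuinely use the full simplex (not just its apex) together with the asymptotic order-of-vanishing inequality $\nu_1+\cdots+\nu_n\geq\ord_x(\|D\|)$ underlying Lemma~\ref{lemma:origin}. Your Step~2 reduction $(2)\Rightarrow(3)$ is in the right spirit---it is the content of Lemma~\ref{lemma:compare}---but the paper carries it out by a volume comparison rather than by tracking individual sections, which is cleaner since valuation vectors for $Y_\bullet$ and $\widetilde{Y}_\bullet$ do not correspond section-by-section.
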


In characteristic zero, the equivalences $(1) \Longleftrightarrow (2)$ and $(1) \Longleftrightarrow (3)$ were proved in  \cite[Corollary D]{CHPW} and \cite[Theorem B]{KL1}, respectively. See Theorem \ref{thm-main1} for the more precise version.

Theorem \ref{main1} follows from the description of Seshadri constants in terms of Okounkov bodies. The \emph{Seshadri constant} $\eps(V_\bullet ;x)$ of a graded linear series $V_\bullet$ at a point $x$ is a measure of local positivity. It was first introduced by Demailly \cite{D}, and there has been a great deal of effort over the decades to study the Seshadri constants. 
See Section \ref{sec_prelim} for the precise definition.
As was shown in \cite{CHPW}, \cite{KL1, KL2, KL3}, the Seshadri constant $\eps(V_\bullet ;x)$ is closely related to the following constants
$$
\xi(V_\bullet ;x):=\sup_{Y_\bullet} \big\{  \xi \mid \blacktriangle_{\xi}^n \subseteq \okbd_{Y_\bullet}(V_\bullet ) \big\}  ~~ \text{and} ~~ \widetilde\xi (V_\bullet ;x):=\sup_{\widetilde{Y}_\bullet} \big\{ \widetilde{\xi} \mid \widetilde{\blacktriangle}_{\widetilde{\xi}}^{n} \subseteq \okbd_{\widetilde{Y}_\bullet}(\pi^*V_\bullet)  \big\},
$$
where $Y_\bullet$ runs over admissible flags centered at $x$ on $X$ and $\widetilde{Y}_\bullet$ runs over infinitesimal admissible flag over $x$. If no (resp. inverted) standard simplex is contained in the (resp. infinitesimal) Okounkov body, then we put $\xi_{Y_\bullet}(V_\bullet;x)=0$ (or $\widetilde{\xi}_{\widetilde{Y}_\bullet}(V_\bullet;x)=0$).

The following is the main result of the paper, which gives the description of Seshadri constants in terms of Okounkov bodies.

\begin{theorem}\label{main2}
Let $X$ be a smooth projective variety, $x \in X$ be a point, and $V_\bullet$ be a graded linear series associated to a divisor $D$ on $X$. Then we have
$$
\eps(V_\bullet; x) = \widetilde{\xi}(V_\bullet; x) \geq \xi (V_\bullet; x)
$$
in the following cases:
 \begin{enumerate}[wide]
 \item[$(1)$] $(\cha(\Bbbk)\geq 0)$ $V_\bullet$ is complete, and $D$ is nef and big.
 \item[$(2)$] $(\cha(\Bbbk)\geq 0)$ $V_\bullet$ is complete, and $x \not\in \bp(D)$.
 \item[$(3)$] $(\cha(\Bbbk) = 0)$ $V_\bullet$ is complete.
 \item[$(4)$] $(\cha(\Bbbk) = 0)$ $V_\bullet$ is birational, and $x \in X$ is very general.
 \end{enumerate}
\end{theorem}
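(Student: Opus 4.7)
The plan is to express both $\widetilde\xi(V_\bullet;x)$ and $\xi(V_\bullet;x)$ as one-dimensional optimizations over horizontal slices of an Okounkov body, and then to match these slices with the jet-separation characterization of $\varepsilon(V_\bullet;x)$ on the blow-up at $x$.

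Let $\pi\colon\widetilde X\to X$ denote the blow-up at $x$, with exceptional divisor $E\simeq\P^{n-1}$, and set $H:=-E|_E=\OO_E(1)$. I would fix an infinitesimal admissible flag $\widetilde Y_\bullet$ with $\widetilde Y_1=E$ and $\widetilde Y_2\supset\cdots\supset\widetilde Y_n$ a flag of linear subspaces of $E$. A direct unpacking of definitions identifies the slice of $\widetilde{\blacktriangle}_\xi^n$ at first coordinate $s\in[0,\xi]$ with $\{s\}\times\blacktriangle_s^{n-1}$, and the complete Okounkov body of $sH$ on $E$ with respect to the induced linear flag is exactly $\blacktriangle_s^{n-1}$. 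Applying the slicing formula for infinitesimal Okounkov bodies of graded linear series (a variant of \cite[Theorem 4.26]{LM} adapted to $\widetilde Y_1=E$), the slice of $\widetilde\okbd_{\widetilde Y_\bullet}(\pi^*V_\bullet)$ at $x_1=s$ is the Okounkov body on $E$ of the restricted graded linear series
\[
W^s_m:=\mathrm{im}\Bigl(\pi^*V_m\cap H^0\bigl(\widetilde X,\pi^*mD-\lceil ms\rceil E\bigr)\longrightarrow H^0\bigl(E,\OO_E(\lceil ms\rceil)\bigr)\Bigr).
\]
Hence $\widetilde{\blacktriangle}_\xi^n\subseteq\widetilde\okbd_{\widetilde Y_\bullet}(\pi^*V_\bullet)$ is equivalent to $\okbd(W^s_\bullet)=\blacktriangle_s^{n-1}$ for every $s<\xi$.

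The inequality $\widetilde\xi(V_\bullet;x)\le\varepsilon(V_\bullet;x)$ then follows from the jet-separation definition of $\varepsilon$: the equality $\okbd(W^s_\bullet)=\blacktriangle_s^{n-1}$ forces $W^s_m$ to span $H^0(E,\OO_E(ms))$ asymptotically, and lifting these sections yields jet separation of order $\lfloor ms\rfloor$ at $x$, giving $s\le\varepsilon(V_\bullet;x)$. For the reverse inequality, given $s<\varepsilon(V_\bullet;x)$, I would use the positivity of $\pi^*D-sE$ supplied by the hypothesis in each case to prove an asymptotic restriction theorem concluding $\okbd(W^s_\bullet)=\blacktriangle_s^{n-1}$. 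The companion bound $\xi(V_\bullet;x)\le\varepsilon(V_\bullet;x)$ is proved by a parallel, simpler argument on $X$ itself: a containment $\blacktriangle_\xi^n\subseteq\okbd_{Y_\bullet}(V_\bullet)$ produces sections of $V_m$ vanishing to order $\ge m\xi$ at $x=Y_n$, again forcing jet separation.

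The main obstacle, and the source of the case-by-case hypotheses, is the asymptotic restriction statement used in the direction $\widetilde\xi\ge\varepsilon$. In cases $(3)$--$(4)$ I would invoke Kawamata--Viehweg vanishing for $m(\pi^*D-sE)+A$ with $A$ a small ample divisor, together with the very-generality of $x$ in case $(4)$ to reduce the birational setting to the complete one. In cases $(1)$--$(2)$ no such vanishing is available, so one must argue directly: the nefness and bigness of $\pi^*D-sE$ (granted by $D$ nef and big, or by $x\notin\bp(D)$) force $E\not\subseteq\bp(\pi^*D-sE)$, from which the equality of restricted and full volumes on $E$ follows by a geometric argument valid in any characteristic. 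Unifying these four regimes into a single slicing framework, rather than treating them via separate vanishing theorems, is the technical heart of the argument.
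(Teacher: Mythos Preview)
Your plan reverses the relative difficulty of the two inequalities and contains a genuine gap in the direction you treat as routine. The inequality $\widetilde{\xi}(V_\bullet;x)\geq\eps(V_\bullet;x)$ is in fact the easy one and needs neither restriction theorems nor vanishing: if $V_m$ separates $s_m$-jets at $x$, a direct count of lattice points (using that the infinitesimal valuation image sits inside $m\widetilde{\blacktriangle}^n_{\mu}$ by Lemma~\ref{lemma:upper bound2}) already gives $\widetilde{\blacktriangle}^n_{s_m/m}\subseteq\widetilde{\okbd}_{\widetilde{Y}_\bullet}(V_\bullet)$, and taking the limsup finishes. This holds for any birational $V_\bullet$ in any characteristic; Kawamata--Viehweg is irrelevant here.

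The hard direction is $\eps(V_\bullet;x)\geq\widetilde{\xi}(V_\bullet;x)$, and your argument for it does not go through. You claim that $\okbd(W^s_\bullet)=\blacktriangle_s^{n-1}$ forces $W^s_m$ to span $H^0(E,\OO_E(ms))$ ``asymptotically'' and that this yields jet separation. But equality of Okounkov bodies only gives $\dim W^s_m=\dim H^0(E,\OO_E(ms))+o(m^{n-1})$; it never forces $W^s_m=H^0(E,\OO_E(ms))$ for a single $m$, and jet separation to order $\lfloor ms\rfloor$ requires honest surjectivity at every intermediate level, not an asymptotic dimension match at the top one. Worse, your argument uses none of the four case hypotheses, so if it were valid it would hold unconditionally---yet Remark~\ref{remark:gls} exhibits a birational $V_\bullet$ at a special point of $\P^2$ with $\widetilde{\xi}=1$ and $\eps=0$. (Your companion claim that $\blacktriangle_\xi^n\subseteq\okbd_{Y_\bullet}(V_\bullet)$ produces sections vanishing to high order ``forcing jet separation'' is likewise backwards: high-order vanishing at $x$ is the opposite of what jet separation asks.) The paper's actual mechanism for $\eps\geq\widetilde{\xi}$ is entirely different: assuming $\eps<k<\widetilde{\xi}$, one finds a curve $\overline{C}$ with $(\pi^*D-kE)\cdot\overline{C}<0$, hence $\overline{C}\subseteq\bm(\pi^*D-kE)$, and then choosing an infinitesimal flag centered at a point of $\overline{C}\cap E$ forces $\mathbf{0}\notin\okbd_{\widetilde{Y}_\bullet}(\pi^*D-kE)$ via Lemma~\ref{lemma:origin}, contradicting $k<\widetilde{\xi}$. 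The four cases differ only in how this curve is produced and how base loci are controlled (continuity of the moving Seshadri constant in case~(3); the Ein--K\"uchle--Lazarsfeld/Nakamaye differentiation lemma together with Fujita approximation in case~(4)).
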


Theorem \ref{main2} (3) was shown in \cite[Theorem E]{CHPW} for the inequality $\eps(V_\bullet; x)  \geq \xi (V_\bullet; x)$ and \cite[Theorem C]{KL1} for the equality $\eps(V_\bullet; x) = \widetilde{\xi}(V_\bullet; x)$. These works can be regarded as attempts to find a satisfactory theory of positivity of divisors in terms of convex geometry of Okounkov bodies. Another important result in this direction is the description of the augmented base locus $\bp(D)$ via Okounkov bodies proved in \cite[Theorem C]{CHPW} and \cite[Theorem 4.1]{KL1} (see also \cite{KL2, KL3}).

Even though the main theorems in \cite{CHPW} and  \cite{KL1} have the same nature and both depend on the deep results from \cite{ELMNP} about the continuity property for restricted volumes, the proofs look very different. The main technical ingredient of  \cite{KL1} is the interaction between infinitesimal Okounkov bodies of $D$ and jet separation of the adjoint divisor $K_X+D$ (see \cite[Proposition 4.10]{KL1}), but the main technical ingredients of \cite{CHPW} are the slice theorem of Okounkov bodies \cite[Theorem 1.1]{CPW} and a version of Fujita approximation \cite[Proposition 3.7]{BL}. We point out that the techniques aforementioned are based on Nadel vanishing theorem for multiplier ideal sheaves, so the characteristic zero assumption is necessary. 

In this paper, we give a new outlook on this theory by proving the main results of \cite{CHPW} and \cite{KL1} in a uniform way. Our proofs are shorter and simpler than those in \cite{CHPW} and \cite{KL1}.
After proving some basic lemmas, we first give quick direct proofs of Theorem \ref{main1} and Theorem \ref{main2} (1), (2). Our approach is elementary, avoiding the use of vanishing theorems. Consequently, these theorems hold in arbitrary characteristic.
For Theorem \ref{main2} (3), (4), we need to assume ${\rm char}(\Bbbk)=0$ because we apply the continuity result about moving Seshadri constants \cite[Theorem 6.2]{ELMNP} and  the differentiation result \cite[Proposition 2.3]{EKL}, \cite[Lemma 1.3]{N}. The augmented base locus results \cite[Theorem 4.1]{KL1} and \cite[Theorem C]{CHPW} then immediately follow (see Corollary \ref{theorem:0 equivalence}).

Note that the relation between Seshadri constants and Okounkov bodies for a birational graded linear series $V_\bullet$ was first studied by Ito \cite{I}. The inequality $\eps(V_\bullet; x) \geq \xi(V_\bullet;x)$ in Theorem \ref{main2} (4) may follow from \cite[Theorem 1.2]{I}, but our approach gives an alternative proof. The equality $\eps(V_\bullet; x) = \widetilde{\xi}(V_\bullet;x)$ is an original result. We remark that Theorem \ref{main2} (4) does not hold for a non-general point (see Remark  \ref{remark:gls}).

As was observed in \cite[Example 7.4]{CHPW}, \cite[Remark 4.9]{KL3}, the inequality $\eps(V_\bullet ;x) \geq \xi(V_\bullet ;x)$ in Theorem \ref{main2} can be strict in general. Moreover, one can conclude from \cite[Remark 3.12]{CPW3} that it is impossible to extract the exact value of $\eps(V_\bullet ;x)$ from the set of non-infinitesimal Okounkov bodies.
Thus it is necessary to consider finer structures on Okounkov bodies in order to read off the exact value of the Seshadri constant. For this purpose, we consider the multiplicative filtration $\mathcal{F}_x$ determined by the geometric valuation $\ord_x$ for a point $x \in X$ as
$$
\mathcal{F}_x^tV_m:=\{ s \in V_m \mid \ord_x(s) \geq t \}.
$$
This multiplicative filtration was treated in \cite{DKMS}, \cite{KMS}.
Now, fix an admissible flag $Y_\bullet$ on $Y$ centered at $x$.
With this data, we can associate a convex subset in Euclidean space
$$
\widehat{\Delta}_{Y_{\bullet}}(f^* V_\bullet, \mathcal{F}_x) \subseteq \R_{\geq 0}^{n+1},
$$
called the \emph{filtered Okounkov body}.
This was introduced in \cite{BC}.
We then define  the \emph{integrated volume function} as
$$
\widehat{\varphi}_x(V_\bullet, \mathcal{F}_x, t):=\int_{u=0}^{t} {\rm vol}_{\mathbb{R}^{n}}(\widehat{\Delta}_{Y_{\bullet}}(f^* V_{\bullet}, \mathcal{F}_x)_{x_{n+1}=u})du.
$$
We show that the derivative $\widehat{\varphi}_{x}'(V_\bullet, \mathcal{F}_x, t)$ always exists (see Proposition \ref{prop:basicfilvol}).
Note that
$$
\vol_{\R^{n+1}} (\widehat{\Delta}_{Y_{\bullet}}(f^* V_\bullet, \mathcal{F}_x)) = \widehat{\varphi}_x(V_\bullet, \mathcal{F}_x, \infty).
$$
The value $\widehat{\varphi}_x(V_\bullet, \mathcal{F}_x, \infty)$ has been used to study diophantine approximation on algebraic varieties in \cite{MR} and the K-stability of Fano varieties (cf. \cite{BJ}, \cite{F}, \cite{Li}).

The following theorem gives a new characterization of the Seshadri constant in terms of the integrated volume function.

\begin{theorem}\label{main3}
Let $X$ be a smooth projective variety of dimension $n$. Let $x \in X$ be a point, and $V_\bullet$ be a graded linear series associated to a divisor $D$ on $X$. Then we have
$$
\eps(V_\bullet;x)=\inf \left \{ t \geq 0 \suchthat \widehat{\varphi}_{x}'(V_\bullet, \mathcal{F}_x, 0)-\widehat{\varphi}_{x}'(V_\bullet, \mathcal{F}_x, t)<\frac{t^n}{n!} \right \}
$$
in the four cases considered in Theorem \ref{main2}.
\end{theorem}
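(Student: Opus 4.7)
The plan is to express the integrated volume function as an integral of restricted volumes along the exceptional divisor of the blow-up at $x$, and then read off $\eps(V_\bullet;x)$ via the simplex-containment characterization supplied by Theorem~\ref{main2}.

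First I would compute $\widehat{\varphi}_x'$. By the Boucksom--Chen description and Proposition~\ref{prop:basicfilvol}, the slice $\widehat{\Delta}_{Y_\bullet}(f^*V_\bullet,\mathcal{F}_x)_{x_{n+1}=u}$ is the ordinary Okounkov body of the subseries $\mathcal{F}_x^{u\cdot}V_\bullet$, so the Lazarsfeld--Musta\c{t}\u{a} volume formula yields
\[
\widehat{\varphi}_x'(V_\bullet,\mathcal{F}_x,u)=\tfrac{1}{n!}\vol(\mathcal{F}_x^{u\cdot}V_\bullet).
\]
Let $\pi\colon\widetilde X\to X$ be the blow-up at $x$ with exceptional divisor $E\simeq\P^{n-1}$. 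Since $\ord_x=\ord_E\circ\pi^*$, one has $\vol(\mathcal{F}_x^{u\cdot}V_\bullet)=\vol(\pi^*V_\bullet-uE)$. Fix an infinitesimal admissible flag $\widetilde Y_\bullet$ with $\widetilde Y_1=E$ and $\widetilde Y_2,\dots,\widetilde Y_n$ a linear flag on $E$; slicing $\widetilde{\okbd}_{\widetilde Y_\bullet}(\pi^*V_\bullet)$ along $x_1$ and applying the slice theorem produces
\[
\vol(V_\bullet)-\vol(\pi^*V_\bullet-tE)=n\int_0^t\vol_{\widetilde X|E}(\pi^*V_\bullet-sE)\,ds,
\]
and hence $\widehat{\varphi}_x'(V_\bullet,\mathcal{F}_x,0)-\widehat{\varphi}_x'(V_\bullet,\mathcal{F}_x,t)=\tfrac{1}{(n-1)!}\int_0^t\vol_{\widetilde X|E}(\pi^*V_\bullet-sE)\,ds$.

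Next I would compare the integrand with $s^{n-1}$. The restricted subseries $(\pi^*V_\bullet-sE)|_E$ lies inside $|\OO_E(s)|=|\OO_{\P^{n-1}}(s)|$, so its Okounkov body with respect to the inherited linear flag -- equal to the slice of $\widetilde{\okbd}_{\widetilde Y_\bullet}(\pi^*V_\bullet)$ at $x_1=s$ -- sits inside the standard $(n-1)$-simplex of size $s$. This yields the pointwise bound $\vol_{\widetilde X|E}(\pi^*V_\bullet-sE)\le s^{n-1}$; since the slice is a convex subset of the standard simplex, equality of $(n-1)$-volumes forces the slice to coincide with the standard simplex of size $s$. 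Therefore equality at every $s\in[0,t]$ is equivalent to $\widetilde{\blacktriangle}_t^n\subseteq\widetilde{\okbd}_{\widetilde Y_\bullet}(\pi^*V_\bullet)$; optimizing the flag and invoking Theorem~\ref{main2} translates this to $t\le\widetilde{\xi}(V_\bullet;x)=\eps(V_\bullet;x)$.

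Consequently the condition in Theorem~\ref{main3} unwinds to
\[
\int_0^t\bigl[s^{n-1}-\vol_{\widetilde X|E}(\pi^*V_\bullet-sE)\bigr]\,ds>0,
\]
whose nonnegative integrand vanishes identically on $[0,t]$ precisely when $t\le\eps(V_\bullet;x)$ and, by continuity of $\vol_{\widetilde X|E}$ in $s$, is strictly positive on a subinterval otherwise. Taking the infimum yields the claimed equality. The main obstacle is the upgrade carried out in the previous paragraph: promoting pointwise equality of restricted volumes at every $s\in[0,t]$ to simplex containment inside a \emph{single} infinitesimal Okounkov body. This step is precisely the content of the inverted-simplex side of Theorem~\ref{main2} in its four cases, so Theorem~\ref{main3} reduces cleanly to results already established earlier in the paper.
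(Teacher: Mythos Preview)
Your proposal is correct and follows the same overall strategy as the paper: reduce Theorem~\ref{main3} to the equality $\eps(V_\bullet;x)=\widetilde{\xi}(V_\bullet;x)$ supplied by Theorem~\ref{main2}. The paper's execution (Proposition~\ref{prop:basicfilvol}(6)) is more direct, however. Using the identity $\widetilde{\okbd}_{\widetilde Y_\bullet}(V_\bullet^{(t)})=\widetilde{\okbd}_{\widetilde Y_\bullet}(V_\bullet)_{x_1\ge t}$ from Example~\ref{ex-multifiltinfokbd}, one gets in a single line
\[
\widehat{\varphi}_x'(V_\bullet,\mathcal{F}_x,0)-\widehat{\varphi}_x'(V_\bullet,\mathcal{F}_x,t)=\vol_{\R^n}\bigl(\widetilde{\okbd}_{\widetilde Y_\bullet}(V_\bullet)_{0\le x_1\le t}\bigr),
\]
after which Lemma~\ref{lemma:upper bound2}(1),(2) immediately characterize when this equals $t^n/n!$. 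Your detour through restricted volumes $\vol_{\widetilde X|E}(\pi^*V_\bullet-sE)$ and a slice integral is equivalent in content but brings in extra machinery: the slice identification you invoke is Lemma~\ref{lemma:upper bound}(2), which in the paper is stated only for complete series with $x\notin\bm(D)$, so in case~(4) you would have to rejustify it for birational $V_\bullet$. The paper's purely convex-geometric route avoids this entirely. Also, there is no need to ``optimize the flag'': Lemma~\ref{lemma:upper bound2}(2) already shows that $\widetilde{\xi}_{\widetilde Y_\bullet}(V_\bullet;x)$ is independent of the infinitesimal flag.
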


In Section \ref{sec_filokbd}, we define the \emph{bounded mass function} $\mass_+(V_m, \mathcal{F}_x, t)$ for $t \geq 0$ as an ``appropriate'' sum of jumping numbers of $(V_m, \mathcal{F}_x)$, and we show in Theorem \ref{theorem:mass} that
$$
\widehat{\varphi}_{x}(V_\bullet, \mathcal{F}_x, t)=\lim_{m \rightarrow \infty} \frac{\mass_+(V_{m}, \mathcal{F}_x, mt)}{m^{n+1}}.
$$
This means that the integrated volume function is independent of the choice of the admissible flags to define the filtered Okounkov body.

\medskip

The rest of the paper is organized as follows. We begin in Section \ref{sec_prelim} with recalling basic definitions. In Section \ref{sec_okbd}, we first show some basic lemmas, and then, give proofs of Theorems \ref{main1} and \ref{main2}. Section \ref{sec_filokbd} is devoted to the study of integrated volume functions; in particular, we prove Theorem \ref{main3}.

\subsection*{Acknowledgement}
The authors would like to thank the referee for careful reading of the paper and useful suggestions to help improve the exposition of the paper. J. Park was partially supported by the Sogang University Research Grant of 201910002.01.

\section{Preliminaries}\label{sec_prelim}

\subsection{Notations}\label{subsec-nota}
Throughout the paper, we fix the following notations.
Let $X$ be a smooth projective variety of dimension $n$ defined over an algebraically closed field $\Bbbk$ of arbitrary characteristic, and $D$ be an $\R$-divisor on $X$. Let $x \in X$ be a point, and $\pi \colon \widetilde{X} \to X$ be the blow-up of $X$ at $x$ with the exceptional divisor $E$. Let $V_\bullet$ be a graded linear series associated to $D$ so that $V_m$ is a linear subspace of $H^0(X, \mathcal{O}_X(\lfloor mD \rfloor) )$ for every integer $m \geq 0$. Recall the following definitions:
 \begin{enumerate}
 \item[$(1)$] $V_\bullet$ is called \emph{complete} if $V_m=H^0(X, \mathcal{O}_X(\lfloor mD \rfloor) )$ for all $m \geq 0$.
 \item[$(2)$] $V_\bullet$ is called \emph{birational} if the rational map given by $|V_m|$ is birational onto its image for any $m \gg 0$. It is exactly same to Condition (B) in \cite[Definition 2.5]{LM}.
 \end{enumerate}

For each integer $m \geq 1$, let $f_m \colon X_m \to X$ be a birational morphism such that $X_m$ is a normal projective variety and $f_m^{-1}\mathfrak{b}(V_m) \cdot \mathcal{O}_{X_m} = \mathcal{O}_{X_m}(-F_m)$ for an effective Cartier divisor $F_m$ on $X_m$. In characteristic zero, we may assume that $f_m$ is a log resolution of the base ideal $\mathfrak{b}(V_m)$.
In positive characteristic, instead of using the resolution of singularities, we construct $f_m$ by taking the normalization of the blow-up along $\mathfrak{b}(V_m)$.
We have a decomposition
$$
f_m^*|V_m|=|W_m|+F_m,
$$
where $W_m \subseteq H^0(X_m, \mathcal{O}_{X_m}(f_m^*\lfloor mD \rfloor) - F_m)$ is a linear subspace defining a free linear series. We set
$$
M_m:=f_m^*\lfloor mD \rfloor - F_m~~\text{ and }~~M_m':=\frac{1}{m}M_m, ~ F_m':=\frac{1}{m}F_m.
$$
Suppose now that $f_m$ is isomorphic over a neighborhood of $x$ and $f_m^{-1}(x) \not\subseteq \Supp(F_m)$. Let $\pi_m \colon \widetilde{X}_m \to X_m$ be the blow-up at the smooth point $f_m^{-1}(x)$ with the exceptional divisor $E_m$. 
We have the following commutative diagram
\begin{displaymath}
\xymatrix{
\widetilde{X}_{m} \ar[d]_{\widetilde{f}_m} \ar[r]^{\pi_m} & X_{m}  \ar[d]^{f_{m}} \\
\widetilde{X} \ar[r]_{\pi} & X.
}
\end{displaymath}

\subsection{Okounkov bodies}
Let $Y$ be a projective variety, and fix an admissible flag on $Y$:
$$
Y_\bullet: Y=Y_0\supseteq Y_1\supseteq \cdots\supseteq Y_{n-1}\supseteq Y_n=\{ y\}.
$$
Here every $Y_i$ is smooth at the point $y$ for $0 \leq i \leq n$.
We define a valuation-like function
$$
\nu_{Y_\bullet} \colon V_\bullet \longrightarrow \R^{n}_{\geq 0}
$$
as follows: Given a nonzero section $s \in V_m$, let 
$$
\nu_1=\nu_1(s)=\ord_{Y_1}(s).
$$ 
After choosing a local equation for $Y_1$ in $Y$, we get $\widetilde{s}_1 \in H^0(X, \mathcal{O}_X(\lfloor mD \rfloor - \nu_1 Y_1))$. Let
$$
\nu_2=\nu_2(s)=\ord_{Y_2}(\widetilde{s_1}|_{Y_1}).
$$
Continuing the process, we obtain
$$
\nu_{Y_\bullet} (s) :=(\nu_1(s), \ldots, \nu_n(s)) \in \Z_{\geq 0}^n.
$$
Let $\Gamma(V_\bullet)_m \subseteq \Z_{\geq 0}^n$ be the image of $\nu_{Y_\bullet} \colon (V_m \setminus \{ 0 \} ) \to \Z^n_{\geq 0}$.
The \emph{Okounkov body} of $V_\bullet$ with respect to $Y_\bullet$ is defined as
$$
 \okbd_{Y_\bullet}(V_\bullet):=\text{closed convex hull } \left( \bigcup_{m \geq 1} \frac{1}{m} \Gamma(V_\bullet)_m \right) \subseteq \R^n_{\geq 0}.
$$
For more details and basic properties, we refer to \cite{KK}, \cite{LM}.

In this paper, we consider mainly two cases. The first one is when $Y=X$ and $Y_\bullet$ is centered at $x$. The other one is when $Y=\widetilde{X}$ is the blow-up of $X$ at a point $x$ and $Y_\bullet=\widetilde{Y}_\bullet$ is an infinitesimal admissible flag, in which the last $n-1$ elements are linear subspaces of $E \simeq \P^{n-1}$. In this case, we say that
$$
\widetilde{\okbd}_{\widetilde{Y}_\bullet}(V_\bullet):=\okbd_{\widetilde{Y}_\bullet}(\pi^*V_\bullet) \subseteq \R^n_{\geq 0}.
$$
is the \emph{infinitesimal Okounkov body} of $V_\bullet$ with respect to $\widetilde{Y}_\bullet$. 
When $V_\bullet$ is complete, we put $\okbd_{Y_\bullet}(D)=\okbd_{Y_\bullet}(D)$ and $\widetilde{\okbd}_{\widetilde{Y}_\bullet}(D)=\widetilde{\okbd}_{\widetilde{Y}_\bullet}(V_\bullet)$.
The infinitesimal Okounkov body was first introduced in \cite{LM} when $x$ is a very general point, and a similar construction is also considered in \cite{WN}. It was generalized to arbitrary point in \cite{KL1, KL2, KL3}. More general infinitesimal admissible flags for Okounkov bodies have been studied in \cite{R}, \cite{CPW3}.

Now, we define nonnegative numbers
$$
\begin{array}{rcl}
\xi_{Y_\bullet}(V_\bullet;x):=\max \{ \xi \mid \blacktriangle_{\xi}^n \subseteq \okbd_{Y_\bullet}(V_\bullet) \} & \text{and} & \widetilde{\xi}_{\widetilde{Y}_\bullet}(V_\bullet;x):=\max \{ \widetilde{\xi} \mid \widetilde{\blacktriangle}_{\widetilde{\xi}}^{n} \subseteq \widetilde{\okbd}_{\widetilde{Y}_\bullet}(V_\bullet) \},\\
\displaystyle  \xi(V_\bullet;x):=\sup_{Y_\bullet} \{ \xi_{Y_\bullet}(V_\bullet;x) \}  & \text{and} & \displaystyle  \widetilde\xi (V_\bullet;x):=\sup_{\widetilde{Y}_\bullet} \{ \widetilde{\xi}_{\widetilde{Y}_\bullet}(V_\bullet;x)  \},
\end{array}
$$
where the supremums run over all admissible flags $Y_\bullet$ centered at $x$ and all infinitesimal admissible flags over $x$, respectively. 
If no (resp. inverted) standard simplex is contained in the (resp. infinitesimal) Okounkov body, then we let $\xi_{Y_\bullet}(V_\bullet;x)=0$ (or $\widetilde{\xi}_{\widetilde{Y}_\bullet}(V_\bullet;x)=0$). 
When $V_\bullet$ is complete, we simply replace $V_\bullet$ by $D$.

In this paper, $\mathcal{F}_x$ is always the multiplicative filtration on $V_\bullet$ given by 
$$
\mathcal{F}_x^t V_m:=\{ s \in V_m \mid \ord_x(s) \geq t \}.
$$
Then $\mathcal{F}_x$ is pointwise bounded below and linearly bounded above in the sense of \cite[Definition 1.3]{BC} (see \cite[Proposition 3.5]{KMS}).
For any $t \in \R$, we have a new graded linear series $V_\bullet^{(t)}$, which is defined as
$V_m^{(t)}:=\mathcal{F}^{tm} V_m $ for all $m \in \Z_{\geq 0}$.
Notice that the Okounkov bodies $\okbd_{Y_\bullet}(V_\bullet^{(t)})$ form a nonincreasing family of convex subsets of $\okbd_{Y_\bullet}(V_\bullet)$. See \cite{BC} for more details.

\begin{lemma}\label{lem-multifilt}
If $V_\bullet$ is birational, then so is $V_\bullet^{(t)}$ for any $t > 0$ such that $V_m^{(t + \epsilon)} \neq \emptyset$ for any integer $m \gg 0$ and a sufficiently small number $\epsilon > 0$. 
\end{lemma}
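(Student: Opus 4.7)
The plan is to produce inside each $V_N^{(t)}$, for $N \gg 0$, a subspace of the form $\sigma^k \cdot V_{m_0}$ where $V_{m_0}$ is already birational and $\sigma$ is a fixed section with large vanishing at $x$, and then to exploit the fact that tensoring by a common factor does not alter the associated rational map. Since $V_\bullet$ is birational, there is an integer $M$ such that $|V_m|$ defines a birational map onto its image for every $m \geq M$. By hypothesis, we may fix some $m_1$ together with a nonzero section $\sigma \in V_{m_1}^{(t+\epsilon)}$, so that $\ord_x(\sigma) \geq (t+\epsilon)m_1$. Because $\ord_x$ is a valuation and $V_\bullet$ is multiplicatively closed, $\sigma^k \in V_{km_1}$ with $\ord_x(\sigma^k) \geq k(t+\epsilon)m_1$ for every $k \geq 0$.

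For each $N \gg 0$, I would write $N = km_1 + m_0$ with $M \leq m_0 < M + m_1$. Multiplication by $\sigma^k$ embeds $V_{m_0}$ into $V_N$, and for any nonzero $s \in V_{m_0}$ one has $\ord_x(\sigma^k s) \geq k(t+\epsilon)m_1$. Since $m_0$ stays bounded as $N \to \infty$ while $k \to \infty$, the inequality $k\epsilon m_1 \geq tm_0$ holds for $N$ sufficiently large, whence $\ord_x(\sigma^k s) \geq t(km_1 + m_0) = tN$. Thus $\sigma^k V_{m_0} \subseteq V_N^{(t)}$.

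The linear series $\sigma^k V_{m_0}$ and $V_{m_0}$ define the same rational map, since they differ only by the common factor $\sigma^k$, which cancels in projective coordinates; in particular $|\sigma^k V_{m_0}|$ is birational onto its image because $m_0 \geq M$. Furthermore, if $W' \subseteq W$ are linear subspaces and $|W'|$ defines a birational map, then so does $|W|$: indeed $\phi_{W'}$ factors as $\phi_W$ followed by a linear projection, so a generic one-point fiber for $\phi_{W'}$ forces a one-point fiber for $\phi_W$. Applied to $W' = \sigma^k V_{m_0} \subseteq W = V_N^{(t)}$, this shows that $|V_N^{(t)}|$ is birational for all $N \gg 0$, i.e. $V_\bullet^{(t)}$ is birational. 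The main technical nuisance is arranging the decomposition $N = km_1 + m_0$ so that simultaneously $m_0 \geq M$ and $k$ is large enough to absorb the slack $tm_0$ in the required vanishing order; the two projective-geometry facts invoked at the end are standard.
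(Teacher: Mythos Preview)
Your proof is correct and follows essentially the same strategy as the paper's: embed a product $(\text{high-vanishing section}) \cdot V_{m'}$ into $V_N^{(t)}$ where $|V_{m'}|$ is already birational, and conclude that $|V_N^{(t)}|$ is birational since it contains a birational subsystem. The only bookkeeping difference is that the paper fixes one birational $V_{m'}$ and picks a fresh section $s \in V_m^{(t+\epsilon)}$ for each large $m$, whereas you fix a single $\sigma \in V_{m_1}^{(t+\epsilon)}$ and use its powers $\sigma^k$ while letting $m_0$ range over the bounded interval $[M, M+m_1)$; in particular your argument only needs nonemptiness of $V_{m_1}^{(t+\epsilon)}$ at a single level rather than at all large $m$.
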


\begin{proof}
Let $m' \gg 0$ be an integer such that $V_{m'}$ defines a birational map, and $\epsilon > 0$ be a sufficiently small number such that $V_{m}^{(t + \epsilon)} \neq \emptyset$ for $m \gg 0$.
 For any integer $m \gg 0$ with $m(t+\epsilon) > (m+m')t$, we have $s \cdot V_{m'} \subseteq V_{m+m'}^{(t)}$ for any zero section $s \in V_{m}^{(t + \epsilon)}$. Then $V_{m+m'}^{(t)}$ defines a birational map.
\end{proof}

\begin{example}\label{ex-multifiltinfokbd}
For an admissible flag  $Y_\bullet$ on $X$ centered at $x$, we have
\begin{equation}\label{eq-multifiltokbd1}
\okbd_{Y_\bullet}(V_\bullet^{(t)}) \subseteq \okbd_{Y_\bullet}(V_\bullet) \setminus \blacktriangle_{t}^n~~\text{ for any $t \geq 0$}.
\end{equation}
For an infinitesimal admissible flag $\widetilde{Y}_\bullet$ on $\widetilde{X}$ over $x$, we have
\begin{equation}\label{eq-multifiltokbd}
\widetilde{\okbd}_{\widetilde{Y}_\bullet}(V_\bullet^{(t)}) = \widetilde{\okbd}_{\widetilde{Y}_\bullet}(V_\bullet)_{x_1 \geq t}~~\text{ for any $t \geq 0$}.
\end{equation}
Now, assume that $V_\bullet$ is the complete graded linear series of $D$. For any $t \geq 0$, let $W_\bullet^t$ be the graded linear series on $\widetilde{Y}_1=E$ such that $W_m^t$ for any $m \geq 0$ is given by the image of the map
 $$
H^0(\widetilde{X}, \mathcal{O}_{\widetilde{X}}(\lfloor m(\pi^*D-tE) \rfloor)) \longrightarrow H^0(\widetilde{Y}_1, \mathcal{O}_{\widetilde{Y}_1} (\lfloor m(\pi^*D-tE) \rfloor)).
$$ 
Then we will show in Lemma \ref{lemma:upper bound} (2) that
$$
\widetilde{\okbd}_{\widetilde{Y}_\bullet}(V_\bullet)_{x_1=t} = \okbd_{\widetilde{Y}_\bullet | \widetilde{Y}_1 }(W_\bullet^t).
$$
\end{example}

Following \cite{BC}, we define the \emph{concave transform} of a multiplicative filtration $\mathcal{F}$ on $V_\bullet$ to be a real-valued function on $\okbd_{Y_\bullet}(V_\bullet)$ given by
$$
\varphi_{\mathcal{F}, Y_\bullet}(\mathbf{x}):=\sup \{ t \in \R \mid \mathbf{x} \in \okbd_{Y_\bullet}(V_\bullet^{(t)}) \},
$$
and the \emph{filtered Okounkov body} associated to $V_\bullet, \mathcal{F}$ with respect to $Y_\bullet$ to be  a compact convex subset of $\R_{\geq 0}^n \times \R_{\geq 0} = \R^{n+1}_{\geq 0}$ given by
$$
\widehat{\Delta}_{Y_{\bullet}}(V_{\bullet}, \mathcal{F}):=\{(\mathbf{x},t) \in \Delta_{Y_{\bullet}}(V_{\bullet}) \times \mathbb{R} \text{ $|$ } t \in [0, \varphi_{\mathcal{F_{\bullet}}, Y_{\bullet}}(\mathbf{x})]\} \subseteq \R^{n+1}_{\geq 0}.
$$
Note that
$$
\widehat{\Delta}_{Y_{\bullet}}(V_{\bullet}, \mathcal{F}_x)_{x_{n+1}=t} = \okbd_{Y_\bullet}(V_\bullet^{(t)}).
$$

\subsection{Seshadri constants}\label{subsec:ses}

Let $D$ be an arbitrary $\R$-divisor. 
The \emph{stable base locus} of $D$ is defined as 
$$
\text{SB}(D):=\bigcap_{D \sim_{\R} D' \geq 0} \Supp(D').
$$
Recall that $D \sim_{\R} D'$ if $D-D'$ is an $\R$-linear sum of principal divisors.
If there is no effective divisor $D'$ with $D' \sim_{\R} D$, then $\text{SB}(D)=X$.
The \emph{restricted base locus} of $D$ and the \emph{augmented base locus} of $D$ are defined as
$$
\bm(D):=\bigcup_{A:\text{ample}} \text{SB}(D+A)~~\text{ and }~~\bp(D):=\bigcap_ {A:\text{ample}} \text{SB}(D-A).
$$
Note that $D$ is nef if and only if $\bm(D)=\emptyset$, and $D$ is ample if and only if $\bp(D)=\emptyset$. 
Furthermore, $D$ is not big if and only if $\bp(D)=X$.
See \cite{ELMNP}, \cite{M} for further properties.

Now, for a given graded linear series $V_\bullet$ and a point $x \in X$, let $s(V_m; x)$ be the supremum of integers $s \geq -1$ such that the natural map 
$$
V_m \longrightarrow H^0(\mathcal{O}_X(\lfloor mD \rfloor) \otimes \mathcal{O}_X/\mathfrak{m}_x^{s+1})
$$
is surjective. The \emph{Seshadri constant} of $V_\bullet$ at $x$ is defined to be 
$$
\eps(V_\bullet;x):=\limsup_{m \to \infty} \frac{s(V_m;x)}{m}.
$$ 
If $V_\bullet$ is a complete graded linear series of $D$ and $D$ is nef, then $\eps(V_\bullet;x)$ coincides with the usual Seshadri constant
$$
\eps(D;x):=\sup\{ k \mid \pi^*D-kE \text{ is nef} \} = \inf_{x \in C} \left\{ \frac{D.C}{\mult_x C} \right\},
$$
where the infimum runs over all irreducible curves $C$ on $X$ passing through $x$.
Next, we define the \emph{moving Seshadri constant} $\eps(||D||;x)$ of a divisor $D$ at a point $x$ as follows:
$$
\begin{array}{l}
\text{If $x \in \bp(D)$, then $\eps(||D||;x):=0$.}\\
\text{If $x \not\in \bp(D)$, then $\displaystyle\eps(||D||;x):= \limsup_{m \to \infty} \eps(M_m'; f_m^{-1}(x))$.}
\end{array}
$$
By \cite[Theorem 6.2 and Propositions 6.4, 6.6]{ELMNP2} and \cite[Propositions 7.1.2, 7.2.3, 7.2.10]{Mur}, we have
$$
\eps(||D||;x)=\eps(V_\bullet;x)~~\text{when $V_\bullet$ is a complete graded linear series of $D$.}
$$
Note that $x \in \bp(D)$ if and only if $\eps(||D||;x)=0$.

\begin{lemma}\label{lem:seshample}
Suppose that $D$ is nef and big. Then $D$ is ample if and only if $\eps(D;x) > 0$ for all $x \in X$.
\end{lemma}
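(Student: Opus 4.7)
The plan is to deduce this directly from the facts already collected in Section \ref{subsec:ses}. The key observation is that when $D$ is nef, the classical Seshadri constant $\eps(D;x)$, the Seshadri constant $\eps(V_\bullet;x)$ of the complete graded linear series $V_\bullet$ of $D$, and the moving Seshadri constant $\eps(\|D\|;x)$ all coincide, as recorded in the paragraphs preceding the lemma. I therefore intend to translate the ampleness question into a question about $\bp(D)$ via the stated equivalences, rather than via Nakai--Moishezon or Seshadri's classical criterion.

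For the forward implication, I would argue that ampleness of $D$ is, by definition, equivalent to $\bp(D)=\emptyset$, and the identity $\eps(\|D\|;x)=\eps(D;x)$ (valid because $D$ is nef) combined with the stated equivalence ``$x\in\bp(D)$ iff $\eps(\|D\|;x)=0$'' immediately yields $\eps(D;x)>0$ for every $x\in X$. For the reverse implication, I would run the same chain of equivalences backwards: if $\eps(D;x)>0$ for every $x$, then $\eps(\|D\|;x)>0$ for every $x$, so no point of $X$ lies in $\bp(D)$, hence $\bp(D)=\emptyset$, which is equivalent to ampleness of $D$.

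The argument is essentially formal, so there is no substantial obstacle; the only care needed is to invoke the correct identifications between the three flavors of Seshadri constants under the nef assumption, and to use the standard characterization of ample divisors as those with empty augmented base locus. The nefness hypothesis is used exactly to identify $\eps(D;x)$ with $\eps(\|D\|;x)$, and the bigness hypothesis ensures that $V_\bullet$ is a genuine big graded linear series so that the results from \cite{ELMNP2} quoted above apply to $D$ itself.
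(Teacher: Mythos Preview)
Your proposal is correct and follows essentially the same route as the paper: the paper's proof uses exactly the equivalence $x\in\bp(D)\Leftrightarrow \eps(\|D\|;x)=0$ (together with the identification $\eps(D;x)=\eps(\|D\|;x)$ for nef $D$) to conclude $\bp(D)=\emptyset$ and hence ampleness. The only cosmetic difference is that the paper dispatches the forward implication with a one-word ``clearly'' rather than running the same chain backwards.
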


\begin{proof}
If $D$ is ample, then clearly $\eps(D;x) > 0$ for all $x \in X$. Assume that $\eps(D;x) > 0$ for all $x \in X$. Then $x \not\in \bp(D)$ for all $x \in X$, so $\bp(D)=\emptyset$. Thus $D$ is ample.
\end{proof}

\noindent 

When $\cha(\Bbbk) = 0$, the function 
$$
\eps(||-||;x) \colon N^1(X)_{\R} \longrightarrow \R_{\geq 0}
$$ 
is continuous (see \cite[Proposition 6.3]{ELMNP2}). When $\cha(\Bbbk) > 0$, the function
$$
\eps(||-||;x) \colon \text{Big}_{\R}^{\{ x \}}(X) \to \R_{>0}
$$
is continuous, where $\text{Big}_{\R}^{\{ x \}}(X) $ denotes the open convex subcone of the big cone consisting of big divisors classes $D$ such that $x \not\in \bp(D)$ (see \cite[Proposition 7.1.2]{Mur}).
For further details on Seshadri constants, we refer to \cite[Section 6]{ELMNP2}, \cite{I}, \cite[Chapter 5]{L}, and \cite[Section 7]{Mur}.

The \emph{Nakayama constant} of $V_\bullet$ at $x$ is defined by
$$
\mu(V_\bullet ;x)=\sup \left\{ \frac{\ord_x(s)}{m} \mid s \in V_m \right\}.
$$ 
When $V_\bullet$ is complete, we put $\mu(D;x) =\mu(V_\bullet;x)$. If $D$ is pseudoeffective, then
$$
\mu(D;x)=\sup \{ k \mid \pi^*D - kE~\text{ is pseudoeffective} \}.
$$

\section{Local positivity via Okounkov bodies}\label{sec_okbd}

In this section, we prove Theorems \ref{main1} and  \ref{main2}. 

\subsection{Basic Lemmas}
First, we show some useful lemmas.

\begin{lemma}\label{lemma:upper bound}
Suppose that $D$ is big and $x \not\in \bm(D)$. For $0<k<\mu(D;x)$, we have the following:
\begin{enumerate}[wide, labelindent=0pt]
 \item[$(1)$] $E \nsubseteq \bp(\pi^{*}D-kE)$.
 \item[$(2)$] $\widetilde{\okbd}_{\widetilde{Y}_\bullet}(D)_{x_1=k} = \okbd_{\widetilde{Y}_{\bullet}|\widetilde{Y}_1}(W_\bullet^k)$, where $W_\bullet^k$ is  defined in Example \ref{ex-multifiltinfokbd}.
 \end{enumerate}
\end{lemma}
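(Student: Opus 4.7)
The plan is to prove (1) directly and then to deduce (2) from a slice theorem for infinitesimal Okounkov bodies.

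For (1), my strategy is to exploit the continuity of the asymptotic order of vanishing $\sigma_E(\pi^*L) = \sigma_x(\|L\|)$ as a function of $L$ on the big cone of $X$, a result going back to \cite{ELMNP}. The hypothesis $x \notin \bm(D)$ translates to $\sigma_x(\|D\|) = 0$, while $k < \mu(D;x)$ produces an effective $\R$-representative of $D$ with multiplicity at $x$ strictly exceeding $k$. Fix an ample $H$ on $X$ and $\epsilon_0 > 0$ small enough that $A := \pi^*H - \epsilon_0 E$ is ample on $\widetilde{X}$ (possible for $\epsilon_0 < \eps(H;x)$). By continuity of $\sigma_x(\|\cdot\|)$ and $\mu(\cdot;x)$ on the big cone, for $t>0$ sufficiently small one has $\sigma_x(\|D-tH\|) < k - t\epsilon_0 < \mu(D-tH;x)$. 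Then convex combinations of effective $\R$-representatives of $D - tH$ with low and high multiplicity at $x$ yield, via the intermediate value theorem, an effective $\R$-divisor $D'' \sim_{\R} D - tH$ with $\mult_x(D'') = k - t\epsilon_0$ exactly. The divisor $\pi^* D'' - (k-t\epsilon_0) E$ is then effective, does not contain $E$ in its support, and represents the class $\pi^*D - kE - tA$. Hence $E \not\subseteq \text{SB}_{\R}(\pi^*D - kE - tA)$, which gives $E \not\subseteq \bp(\pi^*D - kE)$ via the standard characterization $\bp(L) = \bigcap_{t>0} \text{SB}(L - tA)$ for ample $A$.

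For (2), I would appeal to the slice theorem for (infinitesimal) Okounkov bodies (cf.\ \cite[Theorem 1.1]{CPW}): granted (1), the restriction map from $H^0(\widetilde{X}, m(\pi^*D - kE))$ to the corresponding space of sections on $E$ has image of maximal order of growth in $m$, and the slice of $\widetilde{\okbd}_{\widetilde{Y}_\bullet}(D)$ at $x_1 = k$ coincides with $\okbd_{\widetilde{Y}_\bullet|_{\widetilde{Y}_1}}(W_\bullet^k)$.

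The main obstacle is part (1): the hypothesis controls only the diminished base locus $\bm(D)$, while the conclusion involves the strictly larger augmented base locus $\bp(\pi^*D - kE)$. The key idea that bridges this gap is to pass from the equality $\sigma_x(\|D\|) = 0$ to the strict inequality $\sigma_x(\|D - tH\|) < k - t\epsilon_0$ on a perturbed class $D - tH$, where the slack comes precisely from the continuity of $\sigma_x(\|\cdot\|)$ on the big cone combined with the strict inequality $k < \mu(D;x)$.
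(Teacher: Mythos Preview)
Your argument for (1) is correct, but it follows a genuinely different route from the paper's. The paper proves (1) in three lines by invoking the volume-theoretic results of Fulger--Koll\'ar--Lehmann \cite{FKL}: since $x\notin\bm(D)$ one has $E\not\subseteq\bm(\pi^*D)$, and then \cite[Theorem A]{FKL} gives $\vol_{\widetilde X}(\pi^*D-kE)\neq\vol_{\widetilde X}(\pi^*D)$, from which \cite[Theorem B]{FKL} immediately yields $E\not\subseteq\bp(\pi^*D-kE)$. Your approach is instead constructive: you perturb to $D-tH$, use continuity of $\sigma_x(\|\cdot\|)$ and $\mu(\cdot;x)$ on the big cone, and interpolate between effective $\R$-representatives of low and high multiplicity to produce an explicit effective divisor in $|\pi^*D-kE-tA|_{\R}$ avoiding $E$. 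Both arguments are valid; the paper's is shorter once one accepts the \cite{FKL} machinery as a black box, while yours is more self-contained and makes the geometric reason transparent. One small caveat: you attribute the continuity of $\sigma_x(\|\cdot\|)$ to \cite{ELMNP}, which is written in characteristic zero; since the lemma is stated over an arbitrary algebraically closed field, you should note that this continuity is elementary (convexity and homogeneity of Nakayama's $\sigma$-function on the big cone) and does not require vanishing theorems, or else cite \cite{M} for the positive-characteristic setting.

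For (2), your approach and the paper's are essentially identical: both deduce the slice description from (1) via a slice theorem. The paper cites \cite[Theorem 4.26]{LM} directly, which applies once $E\not\subseteq\bp(\pi^*D-kE)$; your citation of \cite[Theorem 1.1]{CPW} is a closely related statement, though \cite[Theorem 4.26]{LM} is the more natural reference here since it is formulated precisely under the $\bp$ hypothesis furnished by (1) and is characteristic-free.
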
 
 
 \begin{proof}
Note that $E \not\subseteq \bm(\pi^*D)$. By \cite[Theorem A]{FKL}, we have 
$$
\vol_{\widetilde{X}}(\pi^*D-kE) \neq \vol_{\widetilde{X}}(\pi^*D).
$$
Then $(1)$ follows from \cite[Theorem B]{FKL}, and $(2)$ follows from \cite[Theorem 4.26]{LM}.
 \end{proof}

 \begin{lemma}\label{lemma:upper bound2}
 Suppose that $V_\bullet$ is birational. Then we have the following:
 \begin{enumerate}[wide, labelindent=0pt]
 \item[$(1)$] $\widetilde{\Delta}_{\widetilde{Y}_{\bullet}}(V_\bullet) \subseteq \widetilde{\blacktriangle}^n_{\mu(V_\bullet;x)}$ for any infinitesimal admissible flag $\widetilde{Y}_{\bullet}$ over $x$.
 \item[$(2)$] If $\widetilde{\blacktriangle}^n_{\widetilde{\xi}} \subseteq \widetilde{\Delta}_{\widetilde{Y}_{\bullet}}(V_\bullet)$ for some infinitesimal admissible flag $\widetilde{Y}_{\bullet}$ over $x$, then the same is true for every infinitesimal admissible flag over $x$. In particular, 
 $$
 \widetilde{\xi}(V_\bullet;x) = \widetilde{\xi}_{Y_\bullet'}(V_\bullet;x)
 $$ 
for every infinitesimal admissible flag $Y_{\bullet}'$ over $x$.
 \end{enumerate}
 \end{lemma}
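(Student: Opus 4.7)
The plan is to slice the infinitesimal Okounkov body along the hyperplanes $\{x_1 = k\}$, using that the first coordinate records the order of vanishing along the exceptional divisor $E$.

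For part $(1)$, I would fix a nonzero section $s \in V_m$ and set $\nu_1 := \ord_E(\pi^{*}s) = \ord_x(s)$. Choosing a local equation $t$ of $E$, we have $\pi^{*}s = t^{\nu_1}\widetilde s$ for some $\widetilde s$ not vanishing identically along $E$, so $\widetilde s|_{E}$ is a nonzero section of $\mathcal{O}_E(\nu_1) \cong \mathcal{O}_{\P^{n-1}}(\nu_1)$. Because $\widetilde Y_2 \supseteq \cdots \supseteq \widetilde Y_n$ is a linear flag in $E \simeq \P^{n-1}$, the valuation vector $(\nu_2(s), \ldots, \nu_n(s))$ lies in the standard simplex $\blacktriangle^{n-1}_{\nu_1}$; in particular $\nu_2(s) + \cdots + \nu_n(s) \leq \nu_1(s)$. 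Combined with $\nu_1(s)/m \leq \mu(V_\bullet; x)$, dividing by $m$ and taking the closed convex hull over all $s$ and $m$ yields the defining inequalities of $\widetilde{\blacktriangle}^n_{\mu(V_\bullet;x)}$.

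For part $(2)$, the key input is the slicing identity $\widetilde{\okbd}_{\widetilde Y_\bullet}(V_\bullet)_{x_1 = k} = \okbd_{L_\bullet}(U^k_\bullet)$, where $L_\bullet = \widetilde Y_\bullet|_E$ is the induced linear flag on $E \simeq \P^{n-1}$ and $U^k_\bullet$ is the graded linear series on $E$ whose $m$-th piece is the image of the restriction map from $\{s \in \pi^{*}V_m \,:\, \ord_E(s) \geq mk\}$ into $H^0(E, \mathcal{O}_E(mk))$. This is the analogue of Lemma \ref{lemma:upper bound}$(2)$ for our birational graded linear series, and I would deduce it by retracing the argument of \cite[Theorem 4.26]{LM}. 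Crucially, $U^k_\bullet$ is defined intrinsically from $V_\bullet$ and $k$, independently of the choice of linear flag inside $E$. The hypothesis $\widetilde{\blacktriangle}^n_{\widetilde\xi} \subseteq \widetilde{\okbd}_{\widetilde Y_\bullet}(V_\bullet)$ then translates, slice by slice, into $\blacktriangle^{n-1}_k \subseteq \okbd_{L_\bullet}(U^k_\bullet)$ for every $k \in (0, \widetilde\xi)$.

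Since $U^k_m \subseteq H^0(\P^{n-1}, \mathcal{O}_{\P^{n-1}}(mk))$, the reverse inclusion $\okbd_{L_\bullet}(U^k_\bullet) \subseteq \blacktriangle^{n-1}_k$ is automatic, so the two bodies agree and the Euclidean volume of $\okbd_{L_\bullet}(U^k_\bullet)$ equals $k^{n-1}/(n-1)!$. The volume of the Okounkov body of a graded linear series is independent of the admissible flag by \cite{KK, LM}, so the same equality of volumes holds for $\okbd_{L'_\bullet}(U^k_\bullet)$ with respect to any other linear flag $L'_\bullet$ on $E$; together with the containment $\okbd_{L'_\bullet}(U^k_\bullet) \subseteq \blacktriangle^{n-1}_k$, this forces $\okbd_{L'_\bullet}(U^k_\bullet) = \blacktriangle^{n-1}_k$. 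Reassembling the slices over $k \in (0, \widetilde\xi)$ gives $\widetilde{\blacktriangle}^n_{\widetilde\xi} \subseteq \widetilde{\okbd}_{\widetilde Y'_\bullet}(V_\bullet)$ for any other infinitesimal admissible flag $\widetilde Y'_\bullet$, from which the equality $\widetilde\xi(V_\bullet;x) = \widetilde\xi_{Y'_\bullet}(V_\bullet;x)$ follows immediately. The main obstacle I anticipate is verifying the slicing identity for a birational (not necessarily complete) graded linear series; once this analogue of Lemma \ref{lemma:upper bound}$(2)$ is secured, the rest reduces to the volume comparison above.
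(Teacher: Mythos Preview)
Your argument for part~(1) is essentially the paper's, just carried out directly at the level of sections rather than via the inclusion $\widetilde{\okbd}_{\widetilde Y_\bullet}(V_\bullet) \subseteq \widetilde{\okbd}_{\widetilde Y_\bullet}(D)$ and Lemma~\ref{lemma:upper bound}(2); both are fine.

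For part~(2), your approach is correct in outline but takes a different route from the paper, and the difference is instructive. You slice at $\{x_1 = k\}$ and pass to the restricted series $U^k_\bullet$ on $E$, then invoke flag-independence of the $(n-1)$-dimensional volume of $\okbd_{L_\bullet}(U^k_\bullet)$. The paper instead works with the region $\{x_1 \leq k\}$ and the filtered series $V^{(k)}_\bullet$ on $X$: from~(1) and the identity $\widetilde{\okbd}_{\widetilde Y_\bullet}(V^{(k)}_\bullet) = \widetilde{\okbd}_{\widetilde Y_\bullet}(V_\bullet)_{x_1 \geq k}$ (equation~\eqref{eq-multifiltokbd}) one gets $\vol_X(V_\bullet) - \vol_X(V^{(k)}_\bullet) = n!\,\vol_{\R^n}(\widetilde{\blacktriangle}^n_k)$, and since both volumes on the left are intrinsic, the same equality holds for any other flag $\widetilde Y'_\bullet$; combined with~(1) this forces $\widetilde{\okbd}_{\widetilde Y'_\bullet}(V_\bullet)_{x_1 \leq k} = \widetilde{\blacktriangle}^n_k$. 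The advantage of the paper's route is that the identity~\eqref{eq-multifiltokbd} is essentially tautological (the first infinitesimal coordinate \emph{is} $\ord_x$), whereas your slicing identity $\widetilde{\okbd}_{\widetilde Y_\bullet}(V_\bullet)_{x_1=k} = \okbd_{L_\bullet}(U^k_\bullet)$ is the analogue of \cite[Theorem~4.26]{LM} for a non-complete birational series and requires checking the relevant hypotheses---exactly the obstacle you flag. Your approach buys a slightly more geometric picture (everything happens on $E$), but at the cost of that extra verification; the paper's approach sidesteps it entirely by replacing the $(n-1)$-dimensional slice comparison with an $n$-dimensional region comparison governed by the filtration $\mathcal{F}_x$.
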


\begin{proof}
This was proved in \cite[Propositions 2.5 and 4.7]{KL1} (their proof work for any graded linear series), but we give a proof for reader's convenience. Note that 
$$
\widetilde{\Delta}_{\widetilde{Y}_{\bullet}}(V_\bullet) \subseteq \widetilde{\Delta}_{\widetilde{Y}_{\bullet}}(D)_{x_1 \leq \mu(V_\bullet;x)}.
$$ 
Consider a graded linear series $W_\bullet^k$ on $\widetilde{Y}_1$ in Lemma \ref{lemma:upper bound} (2), which is the restriction of a complete graded linear series of $D$.
We have
$$
\okbd_{\widetilde{Y}_\bullet|\widetilde{Y}_1}(W_\bullet^k) \subseteq \okbd_{\widetilde{Y}_\bullet|\widetilde{Y}_1}(kH) = \blacktriangle^{n-1}_k,
$$
where $H$ is a hyperplane section of $\widetilde{Y}_1 \simeq \P^{n-1}$. This implies  (1).

For $(2)$, assume that $\widetilde{\blacktriangle}^n_{\widetilde{\xi}} \subseteq \widetilde{\Delta}_{\widetilde{Y}_{\bullet}}(V_\bullet)$ for some infinitesimal admissible flag $\widetilde{Y}_\bullet$ over $x$.
By $(1)$ and (\ref{eq-multifiltokbd}) in Example \ref{ex-multifiltinfokbd}, we have
$$
\widetilde{\okbd}_{\widetilde{Y}_\bullet}(V_\bullet) = \widetilde{\blacktriangle}^n_{k} \cup \widetilde{\okbd}_{\widetilde{Y}_\bullet}(V_\bullet^{(k)}).
$$
By \cite[Theorem 2.13]{LM}, we get
$$
\frac{1}{n!} \left( \vol_X(V_\bullet) - \vol_X(V_\bullet^{(k)})  \right)= \vol_{\R^n} \big( \widetilde{\blacktriangle}^n_{k} \big).
$$
Then $\vol_{\R^n}\big( \widetilde{\okbd}_{\widetilde{Y}_\bullet'}(V_\bullet)_{x_1 \leq k} \big) =  \vol_{\R^n} \big( \widetilde{\blacktriangle}^n_{k} \big)$ for any infinitesimal admissible flag $\widetilde{Y}_\bullet'$ over $x$. By $(1)$, we obtain $\widetilde{\okbd}_{\widetilde{Y}_\bullet'}(V_\bullet)_{x_1 \leq k} = \widetilde{\blacktriangle}^n_{k}$, which proves $(2)$.
\end{proof}

\begin{lemma}\label{lemma:origin}
Suppose that $D$ is big. If $\mathbf{0} \in \Delta_{Y_{\bullet}}(D)$ for some admissible flag $Y_{\bullet}$ on $X$ centered at $x$, then $x \notin \bm(D)$.
\end{lemma}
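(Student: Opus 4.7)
The plan is to argue the contrapositive: assume $x \in \bm(D)$ and produce a coordinate $i \in \{1, \ldots, n\}$ along which every point of $\Delta_{Y_\bullet}(D)$ is uniformly bounded away from $0$, thereby excluding the origin.

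First, I would reduce the hypothesis $x \in \bm(D)$ to a divisorial obstruction. Since $D$ is big, the Nakayama $\sigma$-decomposition (see, e.g., \cite{N} or \cite{ELMNP}) identifies $\bm(D)$ with the support of the negative part $N_\sigma(D)$, so some prime divisor $\Gamma \subset X$ through $x$ satisfies $\sigma_\Gamma(D) > 0$. Applying Fekete's lemma to the subadditive sequence $m \mapsto \min\{\ord_\Gamma(s) : 0 \neq s \in V_m\}$ gives the uniform lower bound $\ord_\Gamma(s) \geq \sigma_\Gamma(D) \cdot m$ for every $m \geq 1$ and every nonzero $s \in V_m$.

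The main algebraic input is that $\nu_{Y_\bullet}$ is a valuation on $\mathcal{O}_{X,x}$. In a regular system of parameters $(t_1, \ldots, t_n)$ at $x$ with $Y_i = V(t_1, \ldots, t_i)$ locally, one checks directly that $\nu_{Y_\bullet}(f)$ is the lexicographic leading exponent of the power-series expansion of $f$, hence additive on products of nonzero regular functions and valued in $\Z_{\geq 0}^n$. Let $g$ be a local equation of $\Gamma$ at $x$. Since $g(x) = 0$, we have $\nu_{Y_\bullet}(g) \neq \mathbf{0}$, so we may fix the smallest index $i$ with $\beta := \nu_i(g) \geq 1$. For any nonzero $s \in V_m$, factoring $s = g^{\ord_\Gamma(s)} \cdot h$ in $\mathcal{O}_{X,x}$ with $g \nmid h$ and applying additivity yields
$$
\nu_i(s) \;=\; \ord_\Gamma(s) \cdot \beta + \nu_i(h) \;\geq\; \ord_\Gamma(s) \cdot \beta \;\geq\; \sigma_\Gamma(D)\, \beta \, m.
$$

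Dividing by $m$ gives the uniform estimate $\nu_i(s)/m \geq \sigma_\Gamma(D)\,\beta > 0$ over all $m \geq 1$ and $0 \neq s \in V_m$. Since $\Delta_{Y_\bullet}(D)$ is the closed convex hull of $\bigcup_{m \geq 1} \tfrac{1}{m}\nu_{Y_\bullet}(V_m \setminus \{0\})$, this bound is preserved by convex combinations and closure, so every point of $\Delta_{Y_\bullet}(D)$ has $i$-th coordinate at least $\sigma_\Gamma(D)\,\beta > 0$, contradicting $\mathbf{0} \in \Delta_{Y_\bullet}(D)$. The only nontrivial external input is the Nakayama characterization $\bm(D) = \Supp(N_\sigma(D))$ for big $\R$-divisors; once it is granted, the remaining computation with the lex-flag valuation is a direct application of multiplicativity and should go through smoothly.
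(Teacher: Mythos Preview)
Your argument is correct and follows the same contrapositive strategy as the paper, but the two proofs invoke different asymptotic invariants. The paper works with the asymptotic order of vanishing at the \emph{point}: from $x\in\bm(D)$ it cites \cite[Proposition~2.8]{ELMNP} and \cite[Theorem~C]{M} to obtain $\ord_x(||D||)>0$, and then uses the elementary bound $\ord_x(s)\le\nu_1(s)+\cdots+\nu_n(s)$ for the flag valuation to conclude that every point of $\Delta_{Y_\bullet}(D)$ lies in the half-space $\{\nu_1+\cdots+\nu_n\ge\ord_x(||D||)\}$. You instead pass through a \emph{divisor}: Nakayama's identification $\bm(D)=\Supp N_\sigma(D)$ supplies a prime $\Gamma\ni x$ with $\sigma_\Gamma(D)>0$, and factoring out powers of a local equation of $\Gamma$ bounds a single coordinate $\nu_i$ from below. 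Your route yields slightly sharper geometric information (the body sits in a coordinate half-space rather than merely away from the corner), while the paper's route has the advantage that the resulting inequality $\nu_1+\cdots+\nu_n\ge\ord_x(||D||)$ is recorded as (\ref{eq-okbd_mult}) and reused verbatim in the proof of Theorem~\ref{main2}\,(4). One minor remark: in this paper \cite{N} refers to Nakamaye, not Nakayama; the $\sigma$-decomposition should be cited from Nakayama's book or \cite{ELMNP}, and in positive characteristic from \cite{M}.
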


\begin{proof}
This was shown in \cite[Theorem A]{CHPW} or \cite[Theorem A]{KL2} (their proofs work for positive characteristic), but we include the proof for reader's convenience.
Suppose that $x \in \bm(D)$. By \cite[Proposition 2.8]{ELMNP} and \cite[Theorem C]{M}, $\ord_x(||D||) > 0$. Let $Y_\bullet$ be an admissible flag on $X$ centered at $x$. For any $(\nu_1, \ldots, \nu_n) \in \okbd_{Y_\bullet}(D)$, we have
\begin{equation}\label{eq-okbd_mult}
\nu_1 + \cdots + \nu_n \geq \ord_x(||D||).
\end{equation}
Then $\nu_1 + \cdots + \nu_n >0$, so $\mathbf{0} \not\in \okbd_{Y_\bullet}(D)$.
\end{proof}

\begin{lemma}\label{lemma:curve}
Suppose that $D$ is nef and big. For any $k$ with $\epsilon(D;x) < k <\mu(D;x)$, there is an irreducible curve $C$ on $X$ passing through $x$ such that $\overline{C} \subseteq \bm(\pi^*D-kE)$, where $\overline{C}$ is the strict transform of $C$ by $\pi$. In particular, $\mathbf{0} \not\in \okbd_{\widetilde{Y}_{\bullet}}(\pi^*D-kE)$ for any infinitesimal admissible flag $\widetilde{Y}_\bullet$ over $x$ centered at $x' \in \overline{C} \cap E$.
\end{lemma}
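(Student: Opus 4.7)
\medskip

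\noindent \textbf{Proof sketch.} The plan is to use the two classical descriptions of $\epsilon(D;x)$ to extract a suitable curve, to combine the nef--big hypothesis with $k < \mu(D;x)$ to place $\pi^*D - kE$ itself in the big cone, and finally to invoke Lemma~\ref{lemma:origin}. Since $k > \epsilon(D;x) = \sup\{t : \pi^*D - tE \text{ is nef}\}$, the class $\pi^*D - kE$ is not nef, so there is an irreducible curve $C'$ on $\widetilde{X}$ with $(\pi^*D - kE).C' < 0$. Such $C'$ cannot lie in $E$, since every line $\ell \subset E$ satisfies $(\pi^*D - kE).\ell = k > 0$; hence $C' = \overline{C}$ for a unique irreducible curve $C \subset X$, and $x \in C$ (otherwise $E.\overline{C} = 0$ and nefness of $D$ give $(\pi^*D - kE).\overline{C} = D.C \geq 0$). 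This produces the concrete negative intersection $(\pi^*D - kE).\overline{C} = D.C - k\mult_x(C) < 0$.

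To upgrade this to $\overline{C} \subseteq \bm(\pi^*D - kE)$, fix any ample $A$ on $\widetilde{X}$ and take $t > 0$ small enough that $(\pi^*D - kE + tA).\overline{C} < 0$ still holds. Every effective $\R$-divisor $\R$-linearly equivalent to $\pi^*D - kE + tA$ must then contain $\overline{C}$ in its support---otherwise its intersection with $\overline{C}$ would be nonnegative---so $\overline{C} \subseteq \text{SB}(\pi^*D - kE + tA) \subseteq \bm(\pi^*D - kE)$.

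The final step is to verify that $\pi^*D - kE$ is big, so that Lemma~\ref{lemma:origin} applies on $\widetilde{X}$. Choosing $k_1 \in (k, \mu(D;x))$, the class $\pi^*D - k_1 E$ is pseudoeffective by definition of $\mu$, while $\pi^*D$ is big; the identity
$$
\pi^*D - kE = \frac{k_1 - k}{k_1}\,\pi^*D + \frac{k}{k_1}\bigl(\pi^*D - k_1 E\bigr)
$$
writes it as a positive combination of a big and a pseudoeffective class, hence big. Any point $x' \in \overline{C} \cap E$---nonempty because $E.\overline{C} = \mult_x(C) \geq 1$---therefore lies in $\bm(\pi^*D - kE)$, and the contrapositive of Lemma~\ref{lemma:origin} yields $\mathbf{0} \notin \okbd_{\widetilde{Y}_\bullet}(\pi^*D - kE)$ for every admissible flag on $\widetilde{X}$ centered at $x'$, in particular for every infinitesimal admissible flag over $x$ with this center. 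The only non-routine ingredient is the bigness verification; the remaining pieces are elementary intersection calculations and direct applications of the preceding lemmas.
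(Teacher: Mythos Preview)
Your proof is correct and follows essentially the same route as the paper: both find an irreducible curve on $\widetilde{X}$ with negative intersection against $\pi^*D-kE$, deduce that it lies in $\bm(\pi^*D-kE)$, check that it meets $E$ but is not contained in $E$, and then invoke Lemma~\ref{lemma:origin}. Your write-up is in fact more complete than the paper's: you spell out why the curve cannot lie in $E$ and why its image passes through $x$, and---more importantly---you verify that $\pi^*D-kE$ is big, which is a hypothesis of Lemma~\ref{lemma:origin} that the paper's proof uses without comment.
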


\begin{proof}
Note that $\pi^*D-kE$ is not nef. Thus there is an irreducible curve $\overline{C}$ on $\widetilde{X}$ such that $(\pi^*D-kE).\overline{C} < 0$, and consequently, $\overline{C}  \subseteq \bm(\pi^*D-kE)$. We know that $E.\overline{C} > 0$. Then $\overline{C} \cap E \neq \emptyset$ and $\overline{C} \not\subseteq \Supp(E)$. 
By letting $C:=\pi(\overline{C})$, we are done. 
Now, the `in particular' part follows from Lemma \ref{lemma:origin} because $x' \in \bm(\pi^*D-kE)$.
\end{proof}

The following two lemmas are the main observations of this paper.

\begin{lemma}\label{lemma:compare}
Suppose that $V_\bullet$ is birational. Then we have
$$
\widetilde{\xi}(V_\bullet;x) \geq \xi(V_\bullet; x).
$$
\end{lemma}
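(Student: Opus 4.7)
The plan is to show that whenever $\blacktriangle_\xi^n \subseteq \okbd_{Y_\bullet}(V_\bullet)$ for some admissible flag $Y_\bullet$ centered at $x$, the inverted simplex $\widetilde{\blacktriangle}_\xi^n$ is contained in $\widetilde{\okbd}_{\widetilde{Y}_\bullet}(V_\bullet)$ for any infinitesimal admissible flag $\widetilde{Y}_\bullet$ over $x$. By Lemma \ref{lemma:upper bound2}(2) it suffices to exhibit one such $\widetilde{Y}_\bullet$. The argument I have in mind is a volume squeeze, orchestrated by the filtered subseries $V_\bullet^{(t)}$, between the ordinary and infinitesimal sides. I may assume $\xi > 0$.

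First I would check that $\mu(V_\bullet;x) \geq \xi$. The vertex $\xi\mathbf{e}_1$ of $\blacktriangle_\xi^n$ lies in $\okbd_{Y_\bullet}(V_\bullet)$, so there exist $s_m \in V_m$ with $\nu_1(s_m)/m \to \xi$. In local coordinates at $x$ adapted to $Y_\bullet$, the Taylor expansion $s_m = \sum c_\alpha z^\alpha$ satisfies $\nu_1(s_m) = \min\{\alpha_1 : c_\alpha \neq 0\} \leq \min\{|\alpha| : c_\alpha \neq 0\} = \ord_x(s_m)$, hence $\ord_x(s_m)/m \geq \nu_1(s_m)/m \to \xi$, so $\mu(V_\bullet;x) \geq \xi$.

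Next, for each $0 < t < \xi$, the bound $t < \mu(V_\bullet;x)$ together with Lemma \ref{lem-multifilt} makes $V_\bullet^{(t)}$ birational. Hence by \cite[Theorem 2.13]{LM} both $\okbd_{Y_\bullet}(V_\bullet^{(t)})$ and $\widetilde{\okbd}_{\widetilde{Y}_\bullet}(V_\bullet^{(t)})$ have Euclidean volume $\vol_X(V_\bullet^{(t)})/n!$. Combining the ordinary inclusion (\ref{eq-multifiltokbd1}) with $\blacktriangle_t^n \subseteq \blacktriangle_\xi^n \subseteq \okbd_{Y_\bullet}(V_\bullet)$ yields $\vol_X(V_\bullet^{(t)}) \leq \vol_X(V_\bullet) - t^n$, while the infinitesimal identity (\ref{eq-multifiltokbd}) of Example \ref{ex-multifiltinfokbd} rewrites $\widetilde{\okbd}_{\widetilde{Y}_\bullet}(V_\bullet^{(t)}) = \widetilde{\okbd}_{\widetilde{Y}_\bullet}(V_\bullet)_{x_1 \geq t}$. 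Subtracting the two volumes gives
$$
\vol_{\R^n}\bigl(\widetilde{\okbd}_{\widetilde{Y}_\bullet}(V_\bullet)_{x_1 \leq t}\bigr) \geq \frac{t^n}{n!}.
$$

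To close the sandwich, Lemma \ref{lemma:upper bound2}(1) confines $\widetilde{\okbd}_{\widetilde{Y}_\bullet}(V_\bullet)$ inside $\widetilde{\blacktriangle}_{\mu(V_\bullet;x)}^n$, so for $t \leq \xi \leq \mu(V_\bullet;x)$ the slice $\widetilde{\okbd}_{\widetilde{Y}_\bullet}(V_\bullet)_{x_1 \leq t}$ lies inside the inverted simplex $\widetilde{\blacktriangle}_t^n$, whose volume equals $t^n/n!$ exactly. The previous inequality then forces these two closed sets to coincide, so $\widetilde{\blacktriangle}_t^n \subseteq \widetilde{\okbd}_{\widetilde{Y}_\bullet}(V_\bullet)$ for every $0 < t < \xi$. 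Letting $t \nearrow \xi$ and invoking closedness of the Okounkov body yields $\widetilde{\blacktriangle}_\xi^n \subseteq \widetilde{\okbd}_{\widetilde{Y}_\bullet}(V_\bullet)$, as required. The step I anticipate as the main obstacle is the volume computation: one must be careful to invoke birationality of $V_\bullet^{(t)}$ in just the right range for $t$, as this is the only place where the hypothesis that $V_\bullet$ is birational is genuinely used (through Lemma \ref{lem-multifilt}).
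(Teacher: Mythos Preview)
Your proposal is correct and follows essentially the same volume-squeeze argument as the paper: both use the filtered subseries $V_\bullet^{(t)}$ together with the inclusions (\ref{eq-multifiltokbd1}), (\ref{eq-multifiltokbd}) and Lemma~\ref{lemma:upper bound2}(1) to force $\widetilde{\okbd}_{\widetilde{Y}_\bullet}(V_\bullet)_{x_1\le t}=\widetilde{\blacktriangle}_t^n$. Your version is slightly more explicit in verifying $\mu(V_\bullet;x)\ge\xi$ and in invoking Lemma~\ref{lem-multifilt} to justify applying \cite[Theorem~2.13]{LM} to $V_\bullet^{(t)}$, points the paper leaves implicit.
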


\begin{proof}
It is sufficient to show that
$$
\widetilde{\xi}:= \widetilde{\xi}_{\widetilde{Y}_\bullet}(V_\bullet;x) \geq \xi_{Y_\bullet}(V_\bullet ;x)=:\xi
$$
for an infinitesimal admissible flag $\widetilde{Y}_\bullet$ over $x$ and an admissible flag $Y_\bullet$ on $X$ centered at $x$. Fix a sufficiently small number $\epsilon > 0$. Then $\blacktriangle_{\xi-\epsilon}^n \subseteq \okbd_{Y_\bullet}(V_\bullet)$.
By (\ref{eq-multifiltokbd1}) in Example  \ref{ex-multifiltinfokbd} and \cite[Theorem 2.13]{LM}, we have
\begin{equation}\label{eq-compare1}
\vol_X(V_\bullet^{(\xi-\epsilon)}) \leq \vol_X(V_\bullet) -(\xi-\epsilon)^n.
\end{equation}
On the other hand, by (\ref{eq-multifiltokbd}) in Example  \ref{ex-multifiltinfokbd} and Lemma \ref{lemma:upper bound2} $(1)$, 
$$
\widetilde{\okbd}_{\widetilde{Y}_\bullet}(V_\bullet) \subseteq \widetilde{\blacktriangle}_{\xi-\epsilon}^n \cup \widetilde{\okbd}_{\widetilde{Y}_\bullet}(V_\bullet^{(\xi-\epsilon)}),
$$
so \cite[Theorem 2.13]{LM} implies that 
\begin{equation}\label{eq-compare2}
 \vol_X(V_\bullet) \leq (\xi-\epsilon)^n +\vol_X(V_\bullet^{(\xi-\epsilon)}).
\end{equation}
By comparing (\ref{eq-compare1}) and (\ref{eq-compare2}), we see that the equality holds, and hence, we obtain
$$
\widetilde{\okbd}_{\widetilde{Y}_\bullet}(V_\bullet) = \widetilde{\blacktriangle}_{\xi-\epsilon}^n \cup \widetilde{\okbd}_{\widetilde{Y}_\bullet}(V_\bullet^{(\xi-\epsilon)}).
$$ 
Since $\epsilon>0$ can be arbitrarily small, we get
$\widetilde{\blacktriangle}_{\xi}^n \subseteq \widetilde{\okbd}_{\widetilde{Y}_\bullet}(V_\bullet)$. 
This implies that $\widetilde{\xi} \geq \xi$.
\end{proof}

\begin{lemma}\label{lemma:compare2}
Suppose that $V_\bullet$ is birational. Then we have
$$
\widetilde{\xi}(V_\bullet; x) \geq \eps(V_\bullet; x).
$$
\end{lemma}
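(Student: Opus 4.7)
The plan is to produce enough sections of $V_\bullet$ with prescribed initial monomials at $x$, using the jet-separation definition of the Seshadri constant, and to show that their valuations along an appropriately chosen infinitesimal flag fill out every lattice point of an inverted standard simplex of size $\eps(V_\bullet;x)$. First I would fix local parameters $z_1,\ldots,z_n$ at $x$, which induce projective coordinates $[Z_1:\cdots:Z_n]$ on $E\cong\P^{n-1}$, and take the infinitesimal admissible flag $\widetilde{Y}_\bullet$ with $\widetilde{Y}_1=E$ and $\widetilde{Y}_{i+1}=\{Z_n=Z_{n-1}=\cdots=Z_{n-i+1}=0\}$ in $E$ for $i=1,\ldots,n-1$. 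Since $\widetilde{\xi}(V_\bullet;x)\geq\widetilde{\xi}_{\widetilde{Y}_\bullet}(V_\bullet;x)$ tautologically, it suffices to prove the inequality for this particular flag.

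Next, fix $k$ with $0\leq k<\eps(V_\bullet;x)$. By the definition $\eps(V_\bullet;x)=\limsup_{m}s(V_m;x)/m$, there exist arbitrarily large integers $m$ with $s(V_m;x)\geq\lceil km\rceil$. For each such $m$ and each $\alpha\in\Z^n_{\geq 0}$ with $|\alpha|\leq\lceil km\rceil$, the surjection
\[
V_m\twoheadrightarrow H^0\bigl(\mathcal{O}_X(\lfloor mD\rfloor)\otimes\mathcal{O}_X/\mathfrak{m}_x^{\lceil km\rceil+1}\bigr)
\]
yields a section $s_\alpha\in V_m$ whose Taylor expansion at $x$ starts with $z_1^{\alpha_1}\cdots z_n^{\alpha_n}$. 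A direct chart computation on $\widetilde{X}$ then shows that $\pi^*s_\alpha$ vanishes to order exactly $|\alpha|$ along $E$ and that its residue on $E$ is the section $Z_1^{\alpha_1}\cdots Z_n^{\alpha_n}\in H^0(E,\mathcal{O}_E(|\alpha|))$. Iterating the restriction along the flag gives
\[
\nu_{\widetilde{Y}_\bullet}(\pi^*s_\alpha)=(|\alpha|,\alpha_n,\alpha_{n-1},\ldots,\alpha_2)\in\Z^n_{\geq 0},
\]
and the rescaled valuation $\frac{1}{m}\nu_{\widetilde{Y}_\bullet}(\pi^*s_\alpha)$ lies in $\widetilde{\blacktriangle}^n_k$ because $|\alpha|/m\leq k$ and $\alpha_n+\cdots+\alpha_2\leq|\alpha|$.

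The correspondence $\alpha\mapsto(|\alpha|,\alpha_n,\alpha_{n-1},\ldots,\alpha_2)$ is a bijection between $\{\alpha\in\Z^n_{\geq 0}:|\alpha|\leq\lceil km\rceil\}$ and $\lceil km\rceil\widetilde{\blacktriangle}^n_1\cap\Z^n_{\geq 0}$, so as $m$ runs through the subsequence with $s(V_m;x)\geq\lceil km\rceil$ the collected rescaled valuations become dense in $\widetilde{\blacktriangle}^n_k$. Since each of these points belongs to $\widetilde{\okbd}_{\widetilde{Y}_\bullet}(V_\bullet)$ by definition of the Okounkov body, its closed convex hull contains $\widetilde{\blacktriangle}^n_k$, yielding $\widetilde{\xi}_{\widetilde{Y}_\bullet}(V_\bullet;x)\geq k$. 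Letting $k\nearrow\eps(V_\bullet;x)$ completes the proof.

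The only delicate step is the blow-up chart computation that pins down the valuation $(|\alpha|,\alpha_n,\ldots,\alpha_2)$; one must be careful to match the flag in $E$ with the coordinate hyperplanes $\{Z_j=0\}$ and to read off the correct ordering of the components. No vanishing theorem or jet-separation input on an adjoint divisor is needed, which is what makes the argument characteristic-free and explains why this approach simplifies the infinitesimal half of \cite{KL1}.
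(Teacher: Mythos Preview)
Your proof is correct and follows the same underlying idea as the paper's: use the jet-separation definition of $\eps(V_\bullet;x)$ to produce sections whose valuation vectors along an infinitesimal flag fill out $\widetilde{\blacktriangle}^n_{s_m}$ at level $m$, then pass to the limit. The difference is in execution. You fix a coordinate-aligned flag and compute the valuation $(|\alpha|,\alpha_n,\ldots,\alpha_2)$ explicitly in a blow-up chart, then argue by density. The paper instead works with an \emph{arbitrary} infinitesimal flag and avoids any coordinate computation: it combines the cardinality identity $\#\Gamma(V_\bullet)_m = \dim V_m$ from \cite[Lemma~1.4]{LM} with the containment $\widetilde{\okbd}_{\widetilde{Y}_\bullet}(V_\bullet)\subseteq\widetilde{\blacktriangle}^n_{\mu}$ of Lemma~\ref{lemma:upper bound2}(1) to observe that the number of valuation points with first coordinate $\leq s_m$ already equals $\#(\Z_{\geq 0}^n\cap\widetilde{\blacktriangle}^n_{s_m})=h^0(\mathcal{O}_X/\mathfrak{m}_x^{s_m+1})$, forcing equality of the two lattice-point sets. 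Your approach is more self-contained (it does not invoke Lemma~\ref{lemma:upper bound2}(1)) but is tied to a specific flag; the paper's counting argument is coordinate-free and yields the inclusion $\widetilde{\blacktriangle}^n_{s_m/m}\subseteq\widetilde{\okbd}_{\widetilde{Y}_\bullet}(V_\bullet)$ for every infinitesimal flag simultaneously, which incidentally reproves part of Lemma~\ref{lemma:upper bound2}(2) along the way.
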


\begin{proof}
It is enough to prove that
$$
\widetilde{\xi}:= \widetilde{\xi}_{\widetilde{Y}_\bullet}(V_\bullet;x) \geq \eps(V_\bullet; x) =: \eps
$$
for an infinitesimal admissible flag $\widetilde{Y}_\bullet$ over $x$.
For any integer $m \geq 1$, let $s_m:=s(V_m;x)$ so that the map
$$
V_m \longrightarrow H^0(\mathcal{O}_X(\lfloor mD \rfloor) \otimes \mathcal{O}_X/\mathfrak{m}_x^{s_m+1})
$$
is surjective. Recall that $\Gamma(V_\bullet)_m$ is the image of $\nu_{Y_\bullet} \colon (V_m \setminus \{ 0 \} ) \to \Z_{\geq 0}^n$. By \cite[Lemma 1.4]{LM} and Lemma \ref{lemma:upper bound2} $(1)$, we have
$$
h^0(\mathcal{O}_X(\lfloor mD \rfloor) \otimes \mathcal{O}_X/\mathfrak{m}_x^{s_m+1}) = \# \big( \Gamma(V_\bullet)_m \cap \big(m\widetilde{\okbd}_{\widetilde{Y}_\bullet}(V_\bullet) \big)_{x_1 < s_m + 1}\big) \leq \# \big(\Z_{\geq 0}^n \cap \widetilde{\blacktriangle}^n_{s_m} \big).
$$
However, the both end sides are the same. Thus $\widetilde{\blacktriangle}^n_{s_m} \subseteq m\widetilde{\okbd}_{\widetilde{Y}_\bullet}(V_\bullet)$, so 
$$
\widetilde{\blacktriangle}^n_{\frac{s_m}{m}} \subseteq \widetilde{\okbd}_{\widetilde{Y}_\bullet}(V_\bullet).
$$
Since $\displaystyle \eps = \limsup_{m \to \infty} \frac{s_m}{m}$, it follows that $\widetilde{\blacktriangle}^n_{\eps} \subseteq \widetilde{\okbd}_{\widetilde{Y}_\bullet}(V_\bullet)$. This implies that $\widetilde{\xi} \geq \eps$. 
\end{proof}

\subsection{The case when $V_\bullet$ is complete}

In this subsection, we prove Theorem \ref{main1} and Theorem \ref{main2}  $(1)$, $(2)$, $(3)$.
Throughout the subsection, we assume that $V_\bullet$ is a complete graded linear series of $D$. By Lemma \ref{lemma:upper bound2} $(2)$, 
$$
\widetilde{\xi}:=\widetilde{\xi}(V_\bullet;x)=\widetilde{\xi}_{\widetilde{Y}_\bullet}(V_\bullet;x)
$$
for any infinitesimal admissible flag $\widetilde{Y}_\bullet$ over $x$.
To prove Theorem \ref{main2}, by Lemmas \ref{lemma:compare} and \ref{lemma:compare2}, we only need to show that 
$$
\eps:=\eps(V_\bullet;x) \geq \widetilde{\xi}.
$$

\begin{proof}[Proof of Theorem \ref{main2} (1)]
We assume that $D$ is nef and big.
To derive a contradiction, suppose that $\eps < \widetilde{\xi}$. Take any number $k$ with $\eps < k < \widetilde{\xi}$. By Lemma \ref{lemma:curve}, 
$$
\mathbf{0} \not\in \okbd_{\widetilde{Y}_\bullet}(\pi^*D-k E) 
$$
for some infinitesimal admissible flag $\widetilde{Y}_\bullet$ over $x$.
However, $\widetilde{\blacktriangle}_{\widetilde{\xi}}^n \subseteq \widetilde{\okbd}_{\widetilde{Y}_\bullet}(D)$. Thus
$$
\mathbf{0} \in \widetilde{\okbd}_{\widetilde{Y}_\bullet}(D)_{x_1 \geq k} + (-k, \underbrace{0, \ldots, 0}_{n-1~\text{times}})=\okbd_{\widetilde{Y}_\bullet}(\pi^*D-k E),
$$ 
which is a contradiction. 
Hence $\eps \geq \widetilde{\xi}$, so we finish the proof.
\end{proof}

The following is a more comprehensive version of Theorem \ref{main1}.

\begin{theorem}\label{thm-main1}
Let $X$ be a smooth projective variety of dimension $n$, and $D$ be a big $\R$-divisor on $X$. Then the following are equivalent:
\begin{enumerate}[wide, labelindent=0pt]
 \item[$(1)$] $D$ is ample.
 \item[$(2)$] For every admissible flag $Y_\bullet$ on $X$, the Okounkov body $\okbd_{Y_\bullet}(D)$ contains a nontrivial standard simplex in $\R_{\geq 0}^n$.
 \item[$(3)$] For every point $x \in X$, there is an admissible flag $Y_\bullet$ centered at $x$ such that $\okbd_{Y_\bullet}(D)$ contains a nontrivial standard simplex in $\R_{\geq 0}^n$.
 \item[$(4)$] For every infinitesimal admissible flag $\widetilde{Y}_\bullet$ over $X$, the infinitesimal Okounkov body $ \widetilde{\okbd}_{\widetilde{Y}_\bullet}(D)$ contains a nontrivial inverted standard simplex in $\R_{\geq 0}^n$.
 \item[$(5)$] For every point $x \in X$, there is an infinitesimal admissible flag $\widetilde{Y}_\bullet$ over $x$ such that $\widetilde{\okbd}_{\widetilde{Y}_\bullet}(D)$ contains a nontrivial inverted standard simplex in $\R_{\geq 0}^n$.
\end{enumerate}
\end{theorem}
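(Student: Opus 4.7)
The proof organises into the cycles $(2) \Rightarrow (3) \Rightarrow (1) \Rightarrow (2)$ and $(4) \Rightarrow (5) \Rightarrow (1) \Rightarrow (4)$, building on Theorem~\ref{main2}(1), Lemma~\ref{lem:seshample}, and Lemma~\ref{lemma:upper bound2}(2), with a slicing induction reserved for the last step. The implications $(2) \Rightarrow (3)$ and $(4) \Rightarrow (5)$ are tautological.

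For $(3) \Rightarrow (1)$ and $(5) \Rightarrow (1)$: the hypothesis forces $\mathbf{0}$ to lie in the asserted Okounkov (respectively infinitesimal Okounkov) body at every point $x \in X$. Lemma~\ref{lemma:origin} then yields $\bm(D) = \emptyset$, so $D$ is nef and big. Theorem~\ref{main2}(1) gives $\eps(D;x) \geq \xi(D;x) > 0$ (respectively $\eps(D;x) = \widetilde{\xi}(D;x) > 0$) for every $x$, and Lemma~\ref{lem:seshample} concludes that $D$ is ample.

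For $(1) \Rightarrow (4)$: ampleness of $D$ makes its complete graded linear series birational, so Lemma~\ref{lemma:upper bound2}(2) makes the size $\widetilde{\xi}_{\widetilde{Y}_\bullet}(D;x)$ independent of the infinitesimal flag; Theorem~\ref{main2}(1) and Lemma~\ref{lem:seshample} then give $\widetilde{\xi}_{\widetilde{Y}_\bullet}(D;x) = \widetilde{\xi}(D;x) = \eps(D;x) > 0$.

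The delicate implication is $(1) \Rightarrow (2)$, where the size $\xi_{Y_\bullet}(D;x)$ genuinely varies with the flag and no analogue of Lemma~\ref{lemma:upper bound2}(2) is available. I would proceed by induction on $n = \dim X$. The case $n = 1$ is trivial. For the inductive step, fix an admissible flag $Y_\bullet$ centered at $x$, and choose $\xi_0 > 0$ small enough that $D - \xi_0 Y_1$ remains ample (openness of the ample cone) and $\xi_0 < \mu(D;x)$; both $D|_{Y_1}$ and $(D - \xi_0 Y_1)|_{Y_1}$ are then ample on $Y_1$. Serre vanishing applied to the ample $\R$-divisors $D - tY_1$ (for $t \in \{0,\, \xi_0\}$) makes the restriction maps $H^0(X, m(D - tY_1)) \twoheadrightarrow H^0(Y_1, m(D - tY_1)|_{Y_1})$ surjective for $m \gg 0$, so the slice theorem \cite[Theorem 4.26]{LM} identifies
$$
\okbd_{Y_\bullet}(D)_{x_1 = 0} = \okbd_{Y_\bullet|_{Y_1}}(D|_{Y_1}), \qquad \okbd_{Y_\bullet}(D)_{x_1 = \xi_0} = \okbd_{Y_\bullet|_{Y_1}}((D - \xi_0 Y_1)|_{Y_1}).
$$
The inductive hypothesis provides a standard simplex $\blacktriangle^{n-1}_{\eta_0} \subseteq \okbd_{Y_\bullet|_{Y_1}}(D|_{Y_1})$ with $\eta_0 > 0$; ampleness of $(D - \xi_0 Y_1)|_{Y_1}$ places $\mathbf{0}$ in its Okounkov body, whence $\xi_0 \mathbf{e}_1 \in \okbd_{Y_\bullet}(D)$. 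By convexity, $\okbd_{Y_\bullet}(D)$ contains the simplex with vertices $\mathbf{0},\, \xi_0 \mathbf{e}_1,\, \eta_0 \mathbf{e}_2, \ldots, \eta_0 \mathbf{e}_n$, and in turn the standard simplex $\blacktriangle^n_{\min(\xi_0,\, \eta_0)}$, since the vertex $\xi \mathbf{e}_i$ of the latter lies on the segment from $\mathbf{0}$ to $\xi_0 \mathbf{e}_1$ (for $i = 1$) or to $\eta_0 \mathbf{e}_i$ (for $i \geq 2$). The main technical point is the use of Serre vanishing in place of Kodaira-type vanishing to secure restriction surjectivity, which keeps the argument valid in arbitrary characteristic.
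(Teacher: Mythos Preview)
Your organisation is correct and close in spirit to the paper's, though the paper runs a single chain $(1)\Rightarrow(2)\Rightarrow(3)\Rightarrow(4)\Rightarrow(5)\Rightarrow(1)$ rather than two separate cycles. The step you do not have, $(3)\Rightarrow(4)$, is handled in the paper directly by Lemma~\ref{lemma:compare}; your route via $(3)\Rightarrow(1)\Rightarrow(4)$ reaches the same conclusion because Theorem~\ref{main2}(1) already packages the inequality $\widetilde{\xi}\geq\xi$ of Lemma~\ref{lemma:compare}. In $(5)\Rightarrow(1)$ you are a touch brisk: Lemma~\ref{lemma:origin} is applied on $\widetilde{X}$ to $\pi^*D$, giving $x'\notin\bm(\pi^*D)$, hence $E\not\subseteq\bm(\pi^*D)$, and only then $x\notin\bm(D)$; the paper spells this out.

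The one genuine wrinkle is in your inductive proof of $(1)\Rightarrow(2)$. The flag element $Y_1$ is only required to be smooth at the centre $x$, so $Y_1$ need not be a smooth projective variety, and the theorem as stated (for smooth $X$) cannot be invoked as the inductive hypothesis. The fix is routine---state and prove the implication $(1)\Rightarrow(2)$ for an arbitrary projective variety with an admissible flag, since nothing in the slicing argument uses global smoothness---but it should be said. A second, smaller point: Serre vanishing as you invoke it is for integral (or $\Q$-) divisors, whereas $D$ is an $\R$-divisor; you should either reduce to the $\Q$-case by approximation and continuity of Okounkov bodies, or appeal to Fujita vanishing to get the required surjectivity of restriction uniformly. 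The paper sidesteps all of this by citing \cite[Lemma~6.1]{CHPW} for $(1)\Rightarrow(2)$ as a known standard argument.
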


\begin{proof}
\noindent$(1) \Rightarrow (2)$: It can be shown by a standard argument (see e.g., \cite[Lemma 6.1]{CHPW}), so we skip the proof.

\noindent$(2) \Rightarrow (3)$: It is trivial.

\noindent$(3) \Rightarrow (4)$: If $(3)$ holds, then Lemma \ref{lemma:compare} implies that
$0 < \xi(D;x) \leq \widetilde{\xi}$.
Thus $(4)$ follows.

\noindent$(4) \Rightarrow (5)$: It is trivial.

\noindent$(5) \Rightarrow (1)$: Assume that $(5)$ holds. For any point $x \in X$, there is an infinitesimal admissible flag $\widetilde{Y}_\bullet$ over $x$ centered at $x' \in \widetilde{X}$ such that $\mathbf{0} \in \widetilde{\Delta}_{\widetilde{Y}_\bullet}(D)$. Lemma \ref{lemma:origin} says that $x' \not\in \bm(\pi^*D)$, and hence, $E \not\subseteq \bm(\pi^*D)$. Thus $x \not\in \bm(D)$. Since $x$ is an arbitrary point, it follows that $\bm(D) = \emptyset$. Thus $D$ is nef and big. 
Now, by Theorem \ref{main2} $(1)$, $\eps(D;x) >0$ for all $x \in X$.
By Lemma \ref{lem:seshample}, $D$ is ample, so $(1)$ holds.
\end{proof}

\begin{lemma}\label{lem:continuity}
Let $\{ A_i \}$ be a sequence of ample divisors on $X$ such that  $\displaystyle \lim_{i \to \infty} A_i = 0$. 
If $\widetilde{\xi}(D+A_i; x) = \eps(||D+A_i||;x)$ for all $i$, then $\widetilde{\xi}(D;x)=\eps(||D||;x)$ in the following cases:
 \begin{enumerate}[wide]
 \item[$(1)$] $(\cha(\Bbbk) = 0)$ $D$ is big.
 \item[$(2)$] $(\cha(\Bbbk) > 0)$ $D$ is big and $x \not\in \bp(D)$.
 \end{enumerate}
\end{lemma}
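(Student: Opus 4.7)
The plan is to pass to the limit $i \to \infty$ by combining the continuity of the moving Seshadri constant with the monotonicity of infinitesimal Okounkov bodies under adding ample divisors, and then to invoke Lemma \ref{lemma:compare2} for the reverse inequality.

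First, I would verify that the hypotheses are stable along the sequence. As $D$ is big and each $A_i$ is ample, $D + A_i$ is big for every $i$. In case (1), characteristic zero already ensures that $\eps(||-||;x)$ is continuous on $N^1(X)_{\R}$. In case (2), the standard fact $\bp(D+A) \subseteq \bp(D)$ for any ample $A$---which follows from the definition $\bp(\cdot) = \bigcap_{B \text{ ample}}\text{SB}(\cdot - B)$ by substituting $B + A$ for $B$---gives $x \notin \bp(D + A_i)$ for every $i$, so the continuity of $\eps(||-||;x)$ on $\text{Big}_{\R}^{\{x\}}(X)$ from \cite[Proposition 7.1.2]{Mur} applies. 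Since $A_i \to 0$, in either case we conclude
$$
\lim_{i \to \infty} \eps(||D + A_i||; x) = \eps(||D||; x).
$$

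Next, I would establish the monotonicity
$$
\widetilde{\okbd}_{\widetilde{Y}_\bullet}(D) \subseteq \widetilde{\okbd}_{\widetilde{Y}_\bullet}(D + A)
$$
for any infinitesimal admissible flag $\widetilde{Y}_\bullet$ over $x$ and any ample $\R$-divisor $A$. For $m$ sufficiently large, $\lfloor mA \rfloor$ is an integral ample divisor, so one can choose a section $t_m \in H^0(X, \lfloor mA \rfloor)$ with $t_m(x) \neq 0$. For every $s \in H^0(X, \lfloor mD \rfloor)$, the product $s \cdot t_m$ lies in $H^0(X, \lfloor mD \rfloor + \lfloor mA \rfloor) \subseteq H^0(X, \lfloor m(D + A) \rfloor)$. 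Since $t_m$ is a unit near $x$, its pullback $\pi^* t_m$ restricts to a nonzero constant along $E$ and consequently is a unit along each $\widetilde{Y}_j \subseteq E$, so $\nu_{\widetilde{Y}_\bullet}(\pi^*(s \cdot t_m)) = \nu_{\widetilde{Y}_\bullet}(\pi^* s)$. Hence $\Gamma(D)_m \subseteq \Gamma(D + A)_m$ for all large $m$, and passing to closed convex hulls gives the claimed inclusion. By Lemma \ref{lemma:upper bound2} (2), applicable since the complete graded linear series of big $D$ is birational, this yields $\widetilde{\xi}(D; x) \leq \widetilde{\xi}(D + A_i; x)$ for every $i$.

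Combining the two steps with the hypothesis $\widetilde{\xi}(D + A_i; x) = \eps(||D + A_i||; x)$ and letting $i \to \infty$,
$$
\widetilde{\xi}(D; x) \leq \lim_{i \to \infty} \eps(||D + A_i||; x) = \eps(||D||; x),
$$
while the reverse $\widetilde{\xi}(D; x) \geq \eps(||D||; x)$ follows from Lemma \ref{lemma:compare2} applied to the complete (birational) linear series of $D$. This gives the desired equality. The main obstacle is the monotonicity step: one must verify carefully that the section $t_m$ does not disturb the valuation along the entire infinitesimal flag, not merely along $E$, and that the floor-function inequality $\lfloor mD \rfloor + \lfloor mA \rfloor \leq \lfloor m(D + A) \rfloor$ is enough to produce the required inclusion of graded pieces. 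The remaining ingredients are directly supplied by the cited continuity of moving Seshadri constants and by Lemma \ref{lemma:compare2}.
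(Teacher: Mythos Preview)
Your argument is correct, but it takes a somewhat different route from the paper. The paper establishes the two-sided continuity $\widetilde{\xi}(D;x)=\lim_{i\to\infty}\widetilde{\xi}(D+A_i;x)$ directly, by combining the monotone inclusion $\widetilde{\okbd}_{\widetilde{Y}_\bullet}(D)\subseteq\widetilde{\okbd}_{\widetilde{Y}_\bullet}(D+A_i)$ with the intersection identity $\widetilde{\okbd}_{\widetilde{Y}_\bullet}(D)=\bigcap_{i\geq 1}\widetilde{\okbd}_{\widetilde{Y}_\bullet}(D+A_i)$ (quoting \cite[Lemma~8]{AKL}); it then matches this limit against the continuity of $\eps(\|{-}\|;x)$, without ever invoking Lemma~\ref{lemma:compare2}. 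You instead use only the inclusion to obtain the upper bound $\widetilde{\xi}(D;x)\leq\eps(\|D\|;x)$, and close the argument with Lemma~\ref{lemma:compare2} for the reverse inequality. Your approach is a bit more economical in that it sidesteps the (slightly more delicate) intersection identity, at the price of relying on Lemma~\ref{lemma:compare2}; the paper's approach has the advantage of yielding the genuine continuity of $\widetilde{\xi}$ along such sequences as a byproduct. The technical concerns you flag are harmless: since $t_m(x)\neq 0$, the pullback $\pi^*t_m$ restricts to a nonzero constant section of the trivial bundle on $E$ and hence on every $\widetilde{Y}_j\subseteq E$, so $\nu_{\widetilde{Y}_\bullet}(\pi^*t_m)=\mathbf{0}$ and the valuation is unaffected; and $\lfloor mD\rfloor+\lfloor mA\rfloor\leq\lfloor m(D+A)\rfloor$ gives the needed inclusion of graded pieces.
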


\begin{proof}
For an infinitesimal admissible flag $\widetilde{Y}_\bullet$, we have
$$
\widetilde{\okbd}_{\widetilde{Y}_\bullet}(D) \subseteq \widetilde{\okbd}_{\widetilde{Y}_\bullet}(D+A_i)~~\text{ and }~~\widetilde{\okbd}_{\widetilde{Y}_\bullet}(D)=\bigcap_{i \geq 1} \widetilde{\okbd}_{\widetilde{Y}_\bullet}(D+A_i).
$$
This type of statement first appeared in \cite[Lemma 8]{AKL}. We refer to the proof of \cite[Lemma 8]{AKL}. Consequently, we obtain
$$
\widetilde{\xi}(D;x) = \lim_{i \to \infty} \widetilde{\xi}(D+A_i; x).
$$
On the other hand, notice that $x \not\in \bp(D+A_i)$ for all $i$. By the continuity of $\eps(||-||;x)$ at $D$ (\cite[Proposition 6.3]{ELMNP2}) and \cite[Proposition 7.1.2]{Mur}), we get
$$
\eps(||D||;x)=\lim_{i \to \infty} \eps(||D+A_i||;x).
$$
Thus the lemma follows.
\end{proof}

\begin{proof}[Proof of Theorem \ref{main2} (2)]
We assume that $D$ is big and $x \not\in \bp(D)$.
We can take a sequence $\{ A_i \}$ of ample divisors on $X$ such that $D+A_i$ is a $\Q$-divisor and $\displaystyle \lim_{i \to \infty} A_i = 0$. By Lemma \ref{lem:continuity}, it is sufficient to show that $ \widetilde{\xi}(D+ A_i ;x) = \eps(||D+A_i||;x)$. Hence we may assume that $D$ is a $\Q$-divisor.
For a sufficiently large and divisible integer $m \geq 1$, let $f_m \colon X_m \to X$ be a birational morphism with the decomposition $f_m^*|mD| = M_m + F_m$ and $M_m' = \frac{1}{m}M_m, F_m' = \frac{1}{m}F_m$ as in Subsection \ref{subsec-nota}. We may assume that $f_m$ is isomorphic over a neighborhood of $x$ and $f_m^{-1}(x) \not\subseteq \Supp(F_m)$.

Suppose that $\eps < \widetilde{\xi}$. Take a rational number $k$ with $\eps < k < \widetilde{\xi}$. By Lemma \ref{lemma:curve}, there is a point $x' \in E$ such that 
\begin{equation}\label{eq-x'SB}
x' \in \bm(\pi_m^*M_m' - kE) \subseteq \text{SB}(\pi_m^*M_m' - kE).
\end{equation}
We claim that 
\begin{equation}\tag{$\star$}\label{claimstar}
x' \not\in \text{SB}(\pi^*D - kE).
\end{equation}
Granting the claim for now, we derive a contradiction to conclude that $\eps = \widetilde{\xi}$. 
By the claim,   
$$
x' \not\in \text{Bs}(m(\pi^*D-kE))~\text{ for a sufficiently large and divisible integer $m>0$.}
$$
We have
$$
m(\widetilde{f}_m^*\pi^*D - kE) = \pi_m^*(M_m + F_m) - mkE = (\pi_m^*M_m - mkE) + \pi_m^*F_m.
$$
Since $\widetilde{X}_m$ is normal and $F_m$ is the fixed part of $f_m^*|mD|$ with $f_m^{-1}(x) \not\in \Supp(F_m)$, we may identify
$$
|m(\pi^*D-kE)| =  |\pi_m^*M_m - mkE| + \pi_m^*F_m.
$$
Thus $x' \not\in \text{Bs}(\pi_m^*M_m - mkE)$, which is a contradiction to (\ref{eq-x'SB}).

It only remains to show the claim (\ref{claimstar}). There is an ample divisor $A$ on $X$ such that $F =D-A$ is an effective divisor and $x \not\in \Supp(F)$. Take a sufficiently small number $\delta > 0$ such that $A':=\pi^*A - \delta E$ is ample and a sufficiently small number $\epsilon > 0$ such that 
$$
\frac{k-\epsilon \delta}{1- \epsilon} < \widetilde{\xi}~~\text{ and }~~\bp(\pi^*D-kE)=\bm(\pi^*D-kE - \epsilon A').
$$
We only have to show $x' \not\in \bm(\pi^*D-kE - \epsilon A')$.
Now, notice that
$$
\mathbf{0} \in (1-\epsilon) \okbd_{\widetilde{Y}_\bullet}\left(\pi^*D -\frac{k-\epsilon \delta}{1-\epsilon}E \right) = \okbd_{\widetilde{Y}_\bullet}((1- \epsilon)\pi^*D - (k-\epsilon \delta)E)
$$ 
for an infinitesimal admissible flag $\widetilde{Y}_\bullet$ over $x$ centered at $x'$. By Lemma \ref{lemma:origin}, 
$$
x' \not\in \bm((1- \epsilon)\pi^*D - (k-\epsilon \delta)E),
$$
and hence,
$$
x' \not\in \bm((1- \epsilon)\pi^*D - (k-\epsilon \delta)E + \epsilon \pi^*F).
$$
But we have
$$
(1-\epsilon)\pi^*D - (k-\epsilon \delta)E + \epsilon \pi^*F=(1-\epsilon)\pi^*D - (k-\epsilon \delta)E
+\epsilon (\pi^*D - A'-\delta E)= \pi^*D-kE - \epsilon A'.
$$
This finishes the proof.
\end{proof}

\begin{proof}[Proof of Theorem \ref{main2} (3)]
We assume that ${\rm char}(\Bbbk)=0$ and $D$ is big.
By Theorem \ref{main2} $(2)$, we only have to consider the case that $x \in \bp(D)$. In this case, we know that 
$$
\eps=\eps(||D||;x)=0.
$$ 
Thus it suffices to show that $\widetilde{\xi}=0$. 
If $x \in \bm(D)$, then $E \subseteq \bm(\pi^*D)$ so that $\widetilde{\xi}=0$ by Lemma \ref{lemma:origin}.
Next, assume that $x \in \bp(D) \setminus \bm(D)$. Take an ample divisor $A$ on $X$. 
For any number $\epsilon > 0$, we have $x \not\in \bp(D+\epsilon A)$ , so Theorem \ref{main2} $(2)$ implies that 
$$
\widetilde{\xi}(D+\epsilon A; x) = \eps(||D+\epsilon A||;x ).
$$
By Lemma \ref{lem:continuity}, $\widetilde{\xi} = \eps=0$. We finish the proof.
\end{proof}

As a consequence of Theorem \ref{main2} $(3)$ and \cite[Proposition 6.8]{ELMNP2}, we can obtain \cite[Proposition 4.10]{KL1}, which describes the jet separation of $K_X+D$. For another application, we recover \cite[Theorem C]{CHPW} and \cite[Theorem 4.1]{KL1} as follows.

\begin{corollary}[${\rm char}(\Bbbk)=0$] \label{theorem:0 equivalence}
The following are equivalent:
\begin{enumerate}[wide, labelindent=0pt]
 \item[$(1)$] $x \not\in \bp(D)$.
 \item[$(2)$] The Okounkov body $\okbd_{Y_\bullet}(D)$ contains a nontrivial standard simplex in $\R_{\geq 0}^n$ for every admissible flag $Y_\bullet$ on $X$ centered at $x$.
 \item[$(3)$] The Okounkov body $\okbd_{Y_\bullet}(D)$ contains a nontrivial standard simplex in $\R_{\geq 0}^n$  for some admissible flag $Y_\bullet$ on $X$ centered at $x$.
 \item[$(4)$] The infinitesimal Okounkov body $ \widetilde{\okbd}_{\widetilde{Y}_\bullet}(D)$ contains a nontrivial inverted standard simplex in $\R_{\geq 0}^n$  for every infinitesimal admissible flag $\widetilde{Y}_\bullet$ over $x$.
 \item[$(5)$] The infinitesimal Okounkov body $ \widetilde{\okbd}_{\widetilde{Y}_\bullet}(D)$ contains a nontrivial inverted standard simplex in $\R_{\geq 0}^n$  for some infinitesimal admissible flag $\widetilde{Y}_\bullet$ over $x$.
\end{enumerate}
\end{corollary}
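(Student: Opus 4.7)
The plan is to establish the cycle $(1) \Rightarrow (2) \Rightarrow (3) \Rightarrow (5) \Leftrightarrow (4) \Rightarrow (1)$, using Theorem \ref{main2}$(3)$ together with the lemmas of Section \ref{sec_okbd} as the essential ingredients. The equivalence $(5) \Leftrightarrow (4)$ is immediate: $(4) \Rightarrow (5)$ is trivial, and $(5) \Rightarrow (4)$ is Lemma \ref{lemma:upper bound2}$(2)$, which (since the complete graded linear series of a big divisor is birational) says that $\widetilde{\xi}_{\widetilde{Y}_\bullet}(D;x)$ is independent of the choice of infinitesimal admissible flag, so one such flag yielding a nontrivial inverted standard simplex forces every such flag to do the same. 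The implication $(2) \Rightarrow (3)$ is trivial, and $(3) \Rightarrow (5)$ follows at once from Lemma \ref{lemma:compare}, which gives $\widetilde{\xi}(D;x) \geq \xi(D;x)$: whenever $\xi_{Y_\bullet}(D;x) > 0$ for some admissible flag $Y_\bullet$ centered at $x$, the infinitesimal counterpart $\widetilde{\xi}(D;x)$ is also positive.

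The implication $(4) \Rightarrow (1)$ is the direct application of Theorem \ref{main2}$(3)$: in characteristic zero the complete graded linear series $V_\bullet$ of $D$ satisfies $\eps(V_\bullet;x) = \widetilde{\xi}(D;x)$, and by Subsection \ref{subsec:ses} the left-hand side coincides with the moving Seshadri constant $\eps(\|D\|;x)$, which is positive exactly when $x \notin \bp(D)$. Thus a nontrivial inverted simplex in every (equivalently, some) infinitesimal Okounkov body forces $x \notin \bp(D)$.

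The remaining implication $(1) \Rightarrow (2)$ carries the actual geometric content. If $x \notin \bp(D)$, then by definition of the augmented base locus there is an ample $\R$-divisor $A$ such that $x \notin \mathrm{SB}(D-A)$, so we may write $D \sim_{\R} A + F$ with $F$ an effective $\R$-divisor satisfying $x \notin \Supp(F)$. Fix any admissible flag $Y_\bullet$ centered at $x$. The goal is to show $\okbd_{Y_\bullet}(D) \supseteq \okbd_{Y_\bullet}(A)$ and then invoke Theorem \ref{thm-main1}$(2)$ to conclude that the right-hand side already contains a nontrivial standard simplex. Clearing denominators, a defining section $s_F$ of a sufficiently large integer multiple of $F$ satisfies $s_F(x) \neq 0$; since each stratum $Y_i$ of the flag passes through $x$, no $Y_i$ lies in $\Supp(s_F)$, and propagating the valuation down the flag yields $\nu_{Y_\bullet}(s_F) = \mathbf{0}$. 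Multiplying sections of a multiple of $A$ by an appropriate power of $s_F$ produces sections of the corresponding multiple of $D$ with the same flag valuation, and after rescaling and passing to the limit this yields the desired inclusion.

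The main technical obstacle is the bookkeeping for $\R$-divisors in this last step: one has to approximate $A$ and $F$ by $\Q$-divisors, ensure that the multiplication map lands in the correct rounded linear series $H^0(\OO(\lfloor m(A+F) \rfloor))$, and verify that the inclusion of Okounkov bodies survives in the limit. None of this is deep, but it is the one point where a clean writeup does not follow verbatim from the lemmas already proved; once this inclusion is established, the cycle closes and all five conditions are equivalent.
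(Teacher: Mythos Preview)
Your proposal is correct and follows essentially the same route as the paper: the paper runs the cycle $(1)\Rightarrow(2)\Rightarrow(3)\Rightarrow(4)\Rightarrow(5)\Rightarrow(1)$, using the identical lemmas (Lemma~\ref{lemma:compare} and Lemma~\ref{lemma:upper bound2}(2)) for the middle implications and Theorem~\ref{main2}(3) to close the loop, while your reorganization into $(3)\Rightarrow(5)\Leftrightarrow(4)\Rightarrow(1)$ is a cosmetic rearrangement of the same content. The only substantive difference is that you sketch the argument for $(1)\Rightarrow(2)$ (via the decomposition $D\sim_{\R}A+F$ and the inclusion $\okbd_{Y_\bullet}(A)\subseteq\okbd_{Y_\bullet}(D)$), whereas the paper simply cites it as standard and skips the proof.
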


\begin{proof}
\noindent$(1) \Rightarrow (2)$: It can be shown by a standard argument (see e.g., \cite[Theorem C]{CHPW}), so we skip the proof.

\noindent$(2) \Rightarrow (3) \Rightarrow (4) \Rightarrow (5)$: The proofs are identical to those of Theorem \ref{main1}.

\noindent$(5) \Rightarrow (1)$: Assume that $(5)$ holds. By Theorem \ref{main2} $(3)$, we have $\eps(||D||;x)>0$. Thus $x \not\in \bp(D)$, i.e., $(1)$ holds.
\end{proof}

\begin{remark}\label{remark:char0assump}
The characteristic zero assumption in Theorem \ref{main2} $(3)$ and Corollary \ref{theorem:0 equivalence} is used only when \cite[Theorem 6.2]{ELMNP2} is applied.
When $x \in \bp(D) \setminus \bm(D)$, Theorem \ref{main2} $(3)$ is equivalent to that $\lim_{\epsilon \to 0}\eps (||D+\epsilon A||;x) = 0$. To remove the characteristic zero assumption, by \cite[Proposition 7.1.2 (1)]{Mur}, it is enough to extend the following deep result in \cite{ELMNP2} to positive characteristics:
$$
\text{if $V$ is an irreducible component of $\bp(D)$, then}~\lim_{\epsilon \to 0} \vol_{X|V}(D+\epsilon A) = 0.
$$ 
\end{remark}

\subsection{The case when $V_\bullet$ is birational}

In this subsection, we prove Theorem \ref{main2} $(4)$. 

\begin{proof}[Proof of Theorem \ref{main2} (4)]
We assume that $V_\bullet$ is birational, $x \in X$ is very general, and ${\rm char}(\Bbbk)=0$.
By Lemmas \ref{lemma:upper bound2} $(2)$,  \ref{lemma:compare}, and \ref{lemma:compare2}, we only need to show that 
$$
\eps=\eps(V_\bullet;x) \geq \widetilde{\xi}_{\widetilde{Y}_\bullet}(V_\bullet;x)=\widetilde{\xi}
$$
for an infinitesimal admissible flag $\widetilde{Y_\bullet}$ over $x$.
We use the notations in Subsection \ref{subsec-nota}. 
Since $x$ is a very general point, we may assume that a birational morphism $f_m \colon X_m \to X$ is isomorphic over a neighborhood of $x$ and $f_m^{-1}(x) \not\subseteq \Supp(F_m)$. 
We may identify $f_m^{-1}(x)$ with $x$ and $E_m$ with $E$. We can regard $\widetilde{Y}_\bullet$ as an infinitesimal admissible flag over $x$ on $\widetilde{X}_m$. Let $V_\bullet^m$ be the graded linear series such that $V_i^m$ is the image of the map $S^iV_m \to V_{im}$. Then we have
\begin{equation}\label{eq:okbdvm}
\frac{1}{m} \widetilde{\okbd}_{\widetilde{Y}_\bullet}(V_\bullet^m) \subseteq \frac{1}{m} \widetilde{\okbd}_{\widetilde{Y}_\bullet}(M_m) = \widetilde{\okbd}_{\widetilde{Y}_\bullet}(M_m').
\end{equation}

To derive a contradiction, suppose that $\eps <\widetilde{\xi}$.
Recall from \cite[Lemma 3.10]{I} that
$$
\eps =\eps(V_\bullet;x)  = \sup_{m >0} \eps(M_m'; x).
$$ 
For any $k$ with $\eps(M_m' ; x) < k < \widetilde{\xi}$, by Lemma \ref{lemma:curve}, there is an irreducible curve $C_k$ on $X$ passing through $x$ such that 
$$
\overline{C}_k \subseteq \bm(\pi_m^*M_m'-kE) \subseteq \text{SB}(\pi_m^*M_m' - kE),
$$ 
where $\overline{C}_k$ is the strict transform of $C_k$ by $f_m \circ \pi_m = \pi \circ \widetilde{f}_m$.
Let
$$
\alpha(C_k):=\inf \{\beta \in \Q \mid \overline{C}_k \subseteq \text{SB}(\pi_m^*M_m'-\beta E) \}.
$$
Let $\widetilde{Y}_\bullet$ be an infinitesimal admissible flag over $x$ centered at a point $x' \in \overline{C}_k \cap E$.
By \cite[Proposition 2.3]{EKL}, \cite[Lemma 1.3]{N}, for any $\beta$ with $\alpha(C_k) < \beta < \mu(M_m';x)$,  we have
$$
\ord_{x'} (||\pi_m^*M_m'-\beta E||) \geq \ord_{\overline{C}_k} (||\pi_m^*M_m' - \beta E||) \geq \beta - \alpha(C_k).
$$
For any point 
$$
(\nu_1, \ldots, \nu_{n-1}) \in \widetilde{\okbd}_{\widetilde{Y}_\bullet}(M_m')_{x_1=\beta} = \okbd_{\widetilde{Y}_\bullet}(\pi_m^*M_m' - \beta E)_{x_1=0},
$$ 
by considering Lemma \ref{lemma:upper bound} $(2)$ and (\ref{eq-okbd_mult}) in Lemma \ref{lemma:origin}, we have
$$
\nu_1 + \cdots + \nu_{n-1} \geq \beta - \alpha(C_k).
$$
This implies that 
$$
\text{interior } \big(  \widetilde{\okbd}_{\widetilde{Y}_\bullet}(M_m')_{x_1=\beta}   \big) \cap \text{interior } \big(\blacktriangle_{\beta-\alpha(C_k)}^{n-1} \big) = \emptyset~~\text{ in $\R_{\geq 0}^{n-1}$}.
$$
Note that $\alpha(C_k) \leq k$ so that 
$$
\lim_{k \to \eps(M_m' ;x)} \alpha(C_k) \leq \eps(M_m' ;x).
$$
By considering (\ref{eq:okbdvm}), we see that
$$
\text{interior} \left(\frac{1}{m}\widetilde{\okbd}_{\widetilde{Y}_{\bullet}}(V_\bullet^m)\right) \cap \text{interior }  \big((\eps(M_m';x), \underbrace{0, \ldots, 0}_{n-1~\text{times}}) +\widetilde{\blacktriangle}_{\widetilde{\xi} - \eps(M_m'; x)}^{n} \big) = \emptyset~~\text{in $\R_{\geq 0}^n$}.
$$
Since we have
$$
\frac{1}{m}\widetilde{\okbd}_{\widetilde{Y}_{\bullet}}(V_\bullet^m),~~(\eps(M_m';x), \underbrace{0, \ldots, 0}_{n-1~\text{times}}) +\widetilde{\blacktriangle}_{\widetilde{\xi} - \eps(M_m'; x)}^{n} \subseteq \widetilde{\okbd}_{\widetilde{Y}_\bullet}(V_\bullet),
$$
it follows from \cite[Theorem 2.13]{LM} that
$$
\vol_X(V_\bullet) -  \frac{1}{m^n}\vol_{X}(V_\bullet^m) \geq  (\widetilde{\xi}-\eps(M_m';x)))^n \geq (\widetilde{\xi}-\eps)^n.
$$
However, \cite[Theorem D and Theorem 2.13]{LM} says that $\vol_X(V_\bullet) - \frac{1}{m^n} \vol_{X}(V_\bullet^m)$ is arbitrarily small for a sufficiently large integer $m \gg 0$, so we get a contradiction. Therefore, $\eps = \widetilde{\xi}$, and we complete the proof.
\end{proof}

\begin{remark}\label{remark:gls}
We assume ${\rm char}(\Bbbk)=0$.
One can easily check that if $V_\bullet$ is finitely generated, then Theorem \ref{main2} $(4)$ holds for every point $x \in X$. However, in general, Theorem \ref{main2} $(4)$ may not hold when the point $x$ is not general. For example, we fix a point $x \in \P^2$, and consider a graded linear series $V_\bullet$  associated to $\mathcal{O}_{\P^2}(1)$ given by
$$
V_m:=\{s \in H^{0}(\mathbb{P}^{2}, \mathcal{O}_{\mathbb{P}^{2}}(m)) \text{ }|\text{ {\rm ord}}_{x}(s) \geq 1\}
$$
for any $m \geq 1$. Evidently, $V_\bullet$ is birational. For any infinitesimal admissible flag $\widetilde{Y}_\bullet$ over $x$ or any admissible flag $Y_\bullet$ centered at $x$, we have
$$
\widetilde{\okbd}_{\widetilde{Y}_\bullet}(V_\bullet)=\widetilde{\okbd}_{\widetilde{Y}_\bullet}(\mathcal{O}_{\P^2}(1))~~\text{ and }~~\okbd_{Y_\bullet}(V_\bullet)=\okbd_{Y_\bullet}(\mathcal{O}_{\P^2}(1)),
$$
so we obtain $\widetilde{\xi}(V_\bullet;x)=\xi(V_\bullet;x)=1$. However, we have $\eps(V_\bullet; x)=0$.
\end{remark}

\section{Integrated volume functions}\label{sec_filokbd}

This section is devoted to the study of integrated volume functions; in particular, we prove Theorem \ref{main3}. 
For a given admissible flag $Y_\bullet$ on $X$ centered at $x$ and a graded linear series $V_\bullet$, we define the filtered Okounkov body $\widehat{\okbd}_{Y_\bullet}(V_\bullet, \mathcal{F}_x) \subseteq \R^{n+1}_{\geq 0}$ in Section \ref{sec_prelim}.

\begin{definition}
The \emph{integrated volume function} of $(V_\bullet, \mathcal{F}_x)$ at $x$ is defined by
$$
\widehat{\varphi}_x(V_\bullet, \mathcal{F}_x, t):=\int_{u=0}^{t} {\rm vol}_{\mathbb{R}^{n}}(\widehat{\Delta}_{Y_{\bullet}}(V_{\bullet}, \mathcal{F}_x)_{x_{n+1}=u})du.
$$
\end{definition}

\begin{remark}
We can easily check that the function
$$
\widehat{\varphi}_x(V_\bullet, \mathcal{F}_x, -) \colon \R_{\geq 0} \to \R_{\geq 0}, ~~t \mapsto \widehat{\varphi}_x(V_\bullet, \mathcal{F}_x, t)
$$ 
is nondecreasing and continuous.
When $V_\bullet$ is a complete graded linear series of $D$, we put $\widehat{\varphi}_x(D,t):=\widehat{\varphi}_x(V_\bullet, \mathcal{F}_x, t)$. Clearly, if $D \equiv D'$, then $\widehat{\varphi}_x(D,t)=\widehat{\varphi}_x(D',t)$. Then the function
$$
\widehat{\varphi}_{x}: {\rm Big}(X) \times \mathbb{R}_{\geq 0} \rightarrow \mathbb{R}_{\geq 0}, ~~ (D,t) \mapsto \widehat{\varphi}_x(D,t)
$$
is continuous on the whole domain.
\end{remark}

\begin{example}
Let $X=\P^1 \times \P^1$, and $V_\bullet$ be the complete graded linear series associated to $D=\mathcal{O}_{\P^1}(1) \boxtimes \mathcal{O}_{\P^1}(1)$. Note that $\mu(V_\bullet;x)=2$. For any admissible flag $Y_\bullet$ on $X$ centered at $x$, we have
$$
\vol_{\R^2}\left( \widehat{\Delta}_{Y_{\bullet}}(V_{\bullet}, \mathcal{F}_x)_{x_{n+1}=t} \right)= \vol_{\R^2}\left( \okbd_{Y_\bullet}(V_\bullet^{(t)})\right) =
\begin{cases}
-\frac{1}{2}t^2+1 &\text{if $0 \leq t \leq 1$,}\\
\frac{1}{2}t^2-2t+2 &\text{if $1 \leq t \leq 2$.}
\end{cases}
$$
It then follows that
$$
\widehat{\varphi}_x(V_\bullet, \mathcal{F}_x,t) =
 \begin{cases}
-\frac{1}{6}t^3+t &\text{if $0 \leq t \leq 1$,}\\
\frac{1}{6}t^3 - t^2+2t-\frac{1}{3}&\text{if $1 \leq t \leq 2$.}
\end{cases}
$$
See \cite[Example in Section 4]{MR} for the case that $D=\mathcal{O}_{\P^1}(d_1) \boxtimes \mathcal{O}_{\P^1}(d_2)$ with $d_1, d_2 \geq 1$.
\end{example}

We are now in a position to extract several important invariants of graded linear series from the integrated volume function. In particular, Proposition \ref{prop:basicfilvol} $(6)$ together with Theorem \ref{main2} immediately implies Theorem \ref{main3}.

\begin{proposition}\label{prop:basicfilvol}
We have the following:
\begin{enumerate}[wide, labelindent=0pt]
 \item[$(1)$] $\widehat{\varphi}_x(V_\bullet, \mathcal{F}_x, t)=  \widehat{\varphi}_x(V_\bullet, \mathcal{F}_x, \mu(V_\bullet;x))=\vol_{\R^{n+1}} \left( \widehat{\okbd}_{Y_\bullet}(V_\bullet, \mathcal{F}_x) \right)$ for any $t \geq \mu(V_\bullet;x)$.
 \item[$(2)$] $\widehat{\varphi}_x'(V_\bullet, \mathcal{F}_x, t)=\frac{d \widehat{\varphi}_x(V_\bullet, \mathcal{F}_x, t)}{dt} = \vol_{\R^n}(\okbd_{Y_\bullet}(V_\bullet^{(t)}))$ for all $t \geq 0$.
 \item[$(3)$] $\vol_X(V_\bullet^{(t)}) = n! \cdot \widehat{\varphi}_{x}'(V_\bullet, \mathcal{F}_x,t)$ for all $t \geq 0$.
 \item[$(4)$] $\widehat{\varphi}_{x}'(V_\bullet, \mathcal{F}_x, 0)-\widehat{\varphi}_{x}'(V_\bullet, \mathcal{F}_x, t) \leq \frac{t^n}{n!}$ for all $t \geq 0$.
 \item[$(5)$] $\mu(V_\bullet; x)=\inf \{t \geq 0 \mid \widehat{\varphi}_{x}'(V_\bullet, \mathcal{F}_x,t)=0\}$.
 \item[$(6)$] $\widetilde{\xi}(V_\bullet;x) = \inf \left \{ t \geq 0 \suchthat \widehat{\varphi}_{x}'(V_\bullet, \mathcal{F}_x, 0)-\widehat{\varphi}_{x}'(V_\bullet, \mathcal{F}_x, t)<\frac{t^n}{n!} \right \}$.
\end{enumerate}
\end{proposition}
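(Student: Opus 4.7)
The plan is to establish items (1)-(6) in that order, using throughout that the filtered Okounkov body $\widehat{\Delta}_{Y_\bullet}(V_\bullet, \mathcal{F}_x)$ is a compact convex subset of $\R^{n+1}_{\geq 0}$ whose horizontal slice at height $t$ equals $\okbd_{Y_\bullet}(V_\bullet^{(t)})$, so that $\widehat{\varphi}_x$ is the Cavalieri integral of slice volumes.

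Items (1), (2), (3), (5) are comparatively soft. Item (1) is immediate from the tautology $V_m^{(t)}=0$ for $t>\mu(V_\bullet;x)$, together with Fubini. Item (2) is the fundamental theorem of calculus: by Brunn's principle, the $n$-th root of the slice volume is a concave function of $t$ on its support, hence continuous, so $\widehat{\varphi}_x$ is $C^1$ with derivative equal to the slice volume. Item (3) follows by applying \cite[Theorem 2.13]{LM} to $V_\bullet^{(t)}$ and invoking (2). Item (5) combines (3) with the equivalence $\vol_X(V_\bullet^{(t)})>0 \iff t<\mu(V_\bullet;x)$; the nontrivial direction is produced by multiplying powers $s_0^k$ of a section nearly realizing $\mu$ against sections of $V_m$ to witness positive polynomial growth of $\dim V_\ell^{(t)}$.

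The technical core is (4). I would translate it to the infinitesimal setting via (\ref{eq-multifiltokbd}) in Example \ref{ex-multifiltinfokbd}, which yields, after invoking (3) and \cite[Theorem 2.13]{LM},
$$
\widehat{\varphi}_x'(V_\bullet, \mathcal{F}_x, 0) - \widehat{\varphi}_x'(V_\bullet, \mathcal{F}_x, t) = \vol_{\R^n}\bigl(\widetilde{\okbd}_{\widetilde{Y}_\bullet}(V_\bullet) \cap \{x_1 < t\}\bigr).
$$
Lemma \ref{lemma:upper bound2}(1) confines $\widetilde{\okbd}_{\widetilde{Y}_\bullet}(V_\bullet)$ inside $\widetilde{\blacktriangle}_{\mu(V_\bullet;x)}^n$, and a direct cone computation shows that $\widetilde{\blacktriangle}_{\mu}^n \cap \{x_1\leq t\}$ has $\R^n$-volume at most $t^n/n!$, giving (4).

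Item (6) sharpens (4) by tracking when the above bound is an equality. Chasing the cone computation, saturation at height $t$ is equivalent to $\widetilde{\okbd}_{\widetilde{Y}_\bullet}(V_\bullet) \cap \{x_1\leq t\} = \widetilde{\blacktriangle}_t^n$, which is in turn equivalent to $\widetilde{\blacktriangle}_t^n \subseteq \widetilde{\okbd}_{\widetilde{Y}_\bullet}(V_\bullet)$, i.e., $t\leq \widetilde{\xi}_{\widetilde{Y}_\bullet}(V_\bullet;x)$. Combining this with Lemma \ref{lemma:upper bound2}(2) for flag-independence and the continuity of $\widehat{\varphi}_x'$ from (2) yields (6); Theorem \ref{main3} then follows by combining (6) with Theorem \ref{main2}. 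The main obstacle is the reliance on Lemma \ref{lemma:upper bound2}(1), which in the paper requires birationality of $V_\bullet$; in cases where that lemma does not directly apply, one would need an alternative envelope for the infinitesimal Okounkov body inside an inverted standard simplex of size $\mu(V_\bullet;x)$ to preserve the argument.
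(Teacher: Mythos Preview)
Your proposal is correct and follows essentially the same route as the paper: items (1)--(3) are dispatched directly from the definitions and \cite[Theorem 2.13]{LM}, and (4), (6) are obtained by passing to the infinitesimal Okounkov body via (\ref{eq-multifiltokbd}), invoking Lemma \ref{lemma:upper bound2}(1) for the containment in $\widetilde{\blacktriangle}_{\mu}^n$ and Lemma \ref{lemma:upper bound2}(2) for flag-independence, exactly as the paper does. The only minor deviation is in (5): the paper reads $\mu(V_\bullet;x)$ off directly as $\sup\{\nu_1 : (\nu_1,\ldots,\nu_n)\in\widetilde{\okbd}_{\widetilde{Y}_\bullet}(V_\bullet)\}$ using the infinitesimal body already in play, whereas you argue via $\vol_X(V_\bullet^{(t)})>0$ by a multiplication trick (essentially Lemma \ref{lem-multifilt}); both are fine and amount to the same thing. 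Your concern about the birationality hypothesis needed for Lemma \ref{lemma:upper bound2}(1) is apt and matches the paper's standing assumptions.
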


\begin{proof} $(1)$ and $(2)$ are clear by the definition. Then $(3)$ follows from $(2)$ and \cite[Theorem 2.13]{LM}. Now, fix an infinitesimal admissible flag $\widetilde{Y}_\bullet$ over $x$. Then we have
$$
\widehat{\varphi}_x'(V_\bullet, \mathcal{F}_x, t)=\vol_{\mathbb{R}^{n}}(\Delta_{Y_{\bullet}}(V_{\bullet}^{(t)}))=\vol_{\R^n}(\widetilde{\okbd}_{\widetilde{Y}_\bullet}(V_\bullet)_{x_1 \geq t} ).
$$
By Lemma \ref{lemma:upper bound2} $(1)$ and \cite[Theorem 2.13]{LM}, we obtain $(4)$ (see \cite[Lemma 4.1]{MR} for an alternative proof of $(4)$ when $V_\bullet$ is a complete graded linear series). Observe that
$$
\mu(V_\bullet;x)=\sup\{ \nu_1 \mid \mathbf{x}=(\nu_1, \ldots, \nu_n) \in \widetilde{\okbd}_{\widetilde{Y}_\bullet}(V_\bullet) \}.
$$ 
This implies $(5)$. Note that
$$
\widehat{\varphi}_{x}'(V_\bullet, \mathcal{F}_x, 0)-\widehat{\varphi}_{x}'(V_\bullet, \mathcal{F}_x, t) = \vol_{\R^n}(\widetilde{\okbd}_{\widetilde{Y}_\bullet}(V_\bullet)_{0 \leq x_1 \leq t}).
$$
Then $(6)$ follows from Lemma \ref{lemma:upper bound2} $(1)$ and $(2)$.
\end{proof}

Recall from \cite[Definition 1.2]{BC} that the \emph{jumping numbers} of $(V_m, \mathcal{F}_x)$ are defined as
$$
e_\ell(V_m)=e_\ell(V_m, \mathcal{F}_x):=\sup \{ t \in \R \mid \dim \mathcal{F}_x^t V_m \geq \ell \}~~ \text{for $\ell=1, \ldots, \dim V_m=v_m$}.
$$
We have 
$$
0 \leq e_{v_m}(V_m) \leq \cdots \leq e_1(V_m).
$$ 
The \emph{positive mass} of $(V_m, \mathcal{F}_x)$ is defined as
$$
\mass_+(V_m)=\mass_+(V_m, \mathcal{F}_x) := \sum_{e_j(V_m) > 0} e_j(V_m) = \sum_{1 \leq j \leq v_m} e_j(V_m).
$$

\begin{definition}
\begin{enumerate}[wide, labelindent=0pt]
\item Let 
$$
S(V_m)=S(V_m, \mathcal{F}_x):=\{ e_{v_m}(V_m), \ldots, e_1(V_m) \}~~\text{ and }~~N(V_m)=N(V_m, \mathcal{F}_x):=|S(V_m)|.
$$
\item We define the \emph{effective jumping numbers} of $(V_m, \mathcal{F}_x)$ as
$$
\alpha_j(V_m)=\alpha_j(V_m, \mathcal{F}_x):=\text{the $j$-th largest element in $S(V_m)$ for $j=1, \ldots, N(V_m)$}.
$$
For convention, we put $\alpha_{N(V_m)+1}:=0$. We have
$$
0=\alpha_{N(V_m)+1}(V_m) \leq \alpha_{N(V_m)}(V_m) < \alpha_{N(V_m)-1}(V_m) < \cdots < \alpha_1(V_m).
$$
\item Let 
$$
\begin{array}{l}
\beta_j(V_m)=\beta_j(V_m, \mathcal{F}_x):=\max\{ \ell \in [1, v_m] \mid e_\ell(V_m) = \alpha_j(V_m) \}~\text{ for $j=1, \ldots, N(V_m)$},\\
j_t(V_m)=j_t(V_m, \mathcal{F}_x):=\inf\{ j \in [1, N(V_m)+1] \mid \alpha_j(V_m) \leq t \}~\text{ for $t \geq 0$}.
\end{array}
$$
\item For $t \geq 0$, the \emph{bounded mass function} of $(V_m, \mathcal{F}_x)$ is defined as
$$
\mass_+(V_m, t)=\mass_+(V_m, \mathcal{F}_x, t)
:=\beta_{j_t-1}(t-\alpha_{j_t})+\sum_{j=j_t}^{N(V_m)} \beta_j (\alpha_j - \alpha_{j+1}),
$$
where $\alpha_{j}=\alpha_{j}(V_{m})$, $\beta_{j}=\beta_{j}(V_{m})$, and $j_{t}=j_{t}(V_{m})$. When $j_t(V_m)=N(V_m)+1$, we put 
$$
\sum_{j=j_t(V_m)}^{N(V_m)} \beta_j (V_m)(\alpha_j(V_m) - \alpha_{j+1}(V_m)):=0.
$$
\end{enumerate}
\end{definition}

\begin{remark}
One can check that
$$
\mass_+(V_m)=\sum_{e_j(V_m) > 0} e_j(V_m)=\sum_{j=1}^{N(V_m)} \beta_j(V_m) (\alpha_j(V_m) - \alpha_{j+1}(V_m))=\mass_+(V_m, \infty).
$$
However, $\mass_+(V_m, t) \neq \sum_{0 < e_j(V_m) < t} e_j(V_m)$ in general.
\end{remark}

We now show that the integrated volume function can be expressed in terms of the bounded mass functions. In particular, we see that the integrated volume function $\widehat{\varphi}_{x}(V_\bullet, \mathcal{F}_x, t)$ only depends on the multiplicative filtration $\mathcal{F}_x$ on $V_\bullet$.

\begin{theorem}\label{theorem:mass}
For all $t \geq 0$, we have
$$
\widehat{\varphi}_{x}(V_\bullet, \mathcal{F}_x, t)=\lim_{m \rightarrow \infty} \frac{\mass_+ (V_{m}, \mathcal{F}_x, mt)}{m^{n+1}}.
$$
\end{theorem}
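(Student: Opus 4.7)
The plan is to rewrite $\mass_+(V_m, t)$ as an ordinary integral over the filtration and then pass to the limit using the volume formula for the filtered pieces $V_\bullet^{(s)}$, combined with dominated convergence.

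First I would establish the key identity
$$
\mass_+(V_m, t) = \int_0^t \dim \mathcal{F}_x^u V_m \, du \quad \text{for every } t \geq 0.
$$
This is immediate from unpacking the definitions: the function $u \mapsto \dim \mathcal{F}_x^u V_m$ equals $\#\{\ell : e_\ell(V_m) \geq u\}$, hence takes the constant value $\beta_j(V_m)$ on $(\alpha_{j+1}(V_m), \alpha_j(V_m)]$ and vanishes on $(\alpha_1(V_m), \infty)$. Splitting $\int_0^t$ according to these intervals (with $j_t(V_m)$ recording the piece containing $t$) reproduces verbatim the piecewise formula defining $\mass_+(V_m, t)$; the boundary case $t \geq \alpha_1(V_m)$ is covered by the natural convention $\beta_0(V_m)=0$.

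Next, the substitution $u = ms$ gives
$$
\frac{\mass_+(V_m, mt)}{m^{n+1}} = \int_0^t \frac{\dim \mathcal{F}_x^{ms} V_m}{m^n} \, ds = \int_0^t \frac{\dim V_m^{(s)}}{m^n} \, ds.
$$
For each fixed $s \geq 0$, the Okounkov body volume formula applied to the graded linear series $V_\bullet^{(s)}$ yields
$$
\lim_{m \to \infty} \frac{\dim V_m^{(s)}}{m^n} = \vol_{\R^n}\bigl(\Delta_{Y_\bullet}(V_\bullet^{(s)})\bigr) = \vol_{\R^n}\bigl(\widehat{\Delta}_{Y_\bullet}(V_\bullet, \mathcal{F}_x)_{x_{n+1} = s}\bigr),
$$
valid at least for every $s \in [0, \mu(V_\bullet; x))$ by the Boucksom--Chen framework \cite{BC}, which is applicable because the filtration $\mathcal{F}_x$ is linearly bounded above by \cite[Proposition 3.5]{KMS}.

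Finally, I would invoke dominated convergence. Since $\dim V_m^{(s)} \leq \dim V_m = O(m^n)$ and $\dim V_m^{(s)} = 0$ whenever $s > \mu(V_\bullet; x)$ (for $m \gg 0$), the integrand is uniformly bounded in $m$ and supported in a common compact interval $[0, \mu(V_\bullet; x)+1]$. Swapping the limit and integral then gives
$$
\lim_{m \to \infty} \frac{\mass_+(V_m, mt)}{m^{n+1}} = \int_0^t \vol_{\R^n}\bigl(\widehat{\Delta}_{Y_\bullet}(V_\bullet, \mathcal{F}_x)_{x_{n+1} = s}\bigr) \, ds = \widehat{\varphi}_x(V_\bullet, \mathcal{F}_x, t).
$$
The main obstacle is justifying the pointwise convergence in the previous display: the Okounkov body volume formula is standard for complete graded linear series but is not a priori automatic for sub--graded linear series like $V_\bullet^{(s)}$. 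Handling this is precisely the purpose of the filtered framework of \cite{BC}, where linear boundedness of $\mathcal{F}_x$ ensures that the $V_\bullet^{(s)}$ are controlled uniformly enough for their Okounkov bodies to compute the correct asymptotic dimensions.
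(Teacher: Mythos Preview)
Your proposal is correct and follows essentially the same route as the paper: both arguments identify $\mass_+(V_m,t)$ with $\int_0^t \dim \mathcal{F}_x^u V_m\,du$, rescale via $u=ms$, and then exchange limit and integral by dominated convergence using the pointwise convergence $\dim V_m^{(s)}/m^n \to \vol_{\R^n}(\Delta_{Y_\bullet}(V_\bullet^{(s)}))$. The only cosmetic differences are the order of presentation (you start from $\mass_+$, the paper starts from $\widehat{\varphi}_x$) and the citation for the pointwise limit (you invoke \cite{BC}, the paper invokes \cite[Theorem~2.13]{LM} directly, relying implicitly on Lemma~\ref{lem-multifilt} to ensure $V_\bullet^{(s)}$ satisfies the needed hypotheses).
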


\begin{proof}
For any $t \geq 0$, we have
$$
\widehat{\varphi}_{x}(V_\bullet, \mathcal{F}_x, t)=\int_{0}^{t} {\rm vol}_{\mathbb{R}^{n}}(\Delta_{Y_{\bullet}}(V_{\bullet}^{(u)}))du=\int_{0}^{t} \lim_{m\rightarrow \infty}\frac{{\rm dim}\mathcal{F}_{x}^{mu}V_{m}}{m^{n}}du.
$$
By \cite[Theorem 2.13]{LM}, the sequence of functions $\left\{f_{m}(u):=\frac{{\rm dim}\mathcal{F}_{x}^{mu}V_{m}}{m^{n}} \right\}$ converges pointwise to the function $f(u):={\rm vol}_{\mathbb{R}^{n}}(\Delta_{Y_{\bullet}}(V_{\bullet}^{(u)}))$ as $m \to \infty$.
We have $|f_{m}(u)| \leq \frac{{\rm dim}V_{m}}{m^{n}}$. By \cite[Theorem 2.13 and Remark 2.14]{LM},
$$
\limsup_{m \rightarrow \infty}\frac{{\rm dim}V_{m}}{m^{n}}=\lim_{m \rightarrow \infty}\frac{{\rm dim}V_{m}}{m^{n}}=\frac{{\rm vol}_{X}(V_{\bullet})}{n!},
$$ 
so there exists a constant $C>0$ such that $
 \suchthat \frac{{\rm dim}V_{m}}{m^{n}}  \suchthat \leq C$ for all $m \gg 0$. It is enough to consider only a sufficiently large $m \gg 0$ for the claim, so we may assume that $|f_{m}(u)| \leq C$ for all $m$. The integration is taken over an area $(0,t)$ with a finite measure, and hence, we see that $|f_{m}(u)|$ is bounded by an integrable function. Now, by applying Lebesgue dominated convergence theorem and change of variable $v=mu$, we obtain
$$
\int_{0}^{t} \lim_{m\rightarrow \infty}\frac{{\rm dim}\mathcal{F}_{x}^{mu}V_{m}}{m^{n}}du
=\lim_{m\rightarrow \infty}\int_{0}^{t} \frac{{\rm dim}\mathcal{F}_{x}^{mu}V_{m}}{m^{n}}du=\lim_{m\rightarrow \infty}\frac{1}{m^{n+1}}\int_{0}^{mt} {\rm dim}\mathcal{F}_{x}^{v}V_{m}dv.
$$
By regarding ${\rm dim}\mathcal{F}_{x}^{v}V_{m}$ as a function of $v$, we can write
$$
{\rm dim}\mathcal{F}_{x}^{v}V_{m}=
\begin{cases}
\beta_{N(V_{m})}(V_{m}) &\text{if $v = 0$,} \\
\beta_{j}(V_{m}) &\text{if $v \in (\alpha_{j+1}(V_{m}), \alpha_{j}(V_{m})]$ for $j=N(V_{m}), \ldots, 1$,} \\
0 &\text{if $v \in (\alpha_{1}(V_{m}), \infty)$.}
\end{cases}
$$
Now, it is immediate to see that
$$
\int_{0}^{mt} {\rm dim}\mathcal{F}_{x}^{v}V_{m}dv=\beta_{j_{mt}-1}(mt-\alpha_{j_{mt}})+ \sum_{j=j_{mt}}^{N(V_m)} \beta_j (\alpha_j - \alpha_{j+1})=\mass_+(V_{m},\mathcal{F}_{x},mt),
$$
where $\alpha_{j}=\alpha_{j}(V_{m})$, $\beta_{j}=\beta_{j}(V_{m})$, and $j_{mt}=j_{mt}(V_{m})$, which gives the desired result.
\end{proof}

As a consequence of Theorem \ref{theorem:mass}, we recover \cite[Corollary 1.13]{BC} in our situation.

\begin{corollary}
We have
$$
\vol_{\R^{n+1}} \left( \widehat{\okbd}_{Y_\bullet}(V_\bullet, \mathcal{F}_x) \right) = \widehat{\varphi}_{x}(V_\bullet, \mathcal{F}_x, \infty) = \lim_{m \rightarrow \infty} \frac{\mass_+ (V_{m}, \mathcal{F}_x)}{m^{n+1}}.
$$
\end{corollary}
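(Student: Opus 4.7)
The plan is to reduce the corollary to a direct combination of Proposition \ref{prop:basicfilvol}~(1) and Theorem \ref{theorem:mass}, via the observation that both the integrated volume function and the bounded mass function stabilize beyond the natural threshold given by the Nakayama constant $\mu(V_\bullet;x)$.

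For the first equality, I would invoke Proposition \ref{prop:basicfilvol}~(1): for every $t \geq \mu(V_\bullet;x)$ the integrated volume already agrees with $\vol_{\R^{n+1}}(\widehat{\okbd}_{Y_\bullet}(V_\bullet,\mathcal{F}_x))$, so the nondecreasing function $\widehat{\varphi}_x(V_\bullet,\mathcal{F}_x,-)$ attains this value as its limit at infinity, yielding
$$
\vol_{\R^{n+1}}\!\left(\widehat{\okbd}_{Y_\bullet}(V_\bullet,\mathcal{F}_x)\right)=\widehat{\varphi}_x(V_\bullet,\mathcal{F}_x,\infty).
$$
For the second equality, I would apply Theorem \ref{theorem:mass} at $t=\mu(V_\bullet;x)$. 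The key remark is that $\mass_+(V_m,\mathcal{F}_x,mt)$ stabilizes: the proof of Theorem \ref{theorem:mass} identifies this quantity with $\int_0^{mt}\dim\mathcal{F}_x^v V_m\,dv$, whose integrand vanishes once $v>\alpha_1(V_m)$. Since $\alpha_1(V_m)/m\leq \mu(V_\bullet;x)$ by the very definition of the Nakayama constant, choosing $t=\mu(V_\bullet;x)$ ensures $mt\geq \alpha_1(V_m)$ uniformly in $m$, so $\mass_+(V_m,\mathcal{F}_x,mt)=\mass_+(V_m,\mathcal{F}_x)$ for every $m$, and Theorem \ref{theorem:mass} delivers exactly the second identity.

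There is no real obstacle here: the argument is essentially bookkeeping that chains the two preceding results. The only point requiring care is the uniformity of the stabilization threshold in $m$, which is precisely what the Nakayama constant supplies. Alternatively, one could bypass fixing a threshold and simply let $t\to\infty$ in Theorem \ref{theorem:mass}, invoking monotone convergence for $\mass_+(V_m,\mathcal{F}_x,mt)$ together with the continuity of $\widehat{\varphi}_x(V_\bullet,\mathcal{F}_x,-)$ established earlier.
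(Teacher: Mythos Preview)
Your proposal is correct and matches the paper's approach: the paper states the corollary as an immediate consequence of Theorem~\ref{theorem:mass} without further argument, and your write-up simply fills in the straightforward details, invoking Proposition~\ref{prop:basicfilvol}~(1) for the first equality and the stabilization of $\mass_+(V_m,\mathcal{F}_x,mt)$ past $m\mu(V_\bullet;x)$ for the second. The only implicit assumption you use---that $\mu(V_\bullet;x)<\infty$---is guaranteed by the linear boundedness of $\mathcal{F}_x$ noted in Section~\ref{sec_prelim}.
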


\begin{example}
Let $X=\P^2$ be any point, and $V_\bullet$ be the complete graded linear series associated to $\mathcal{O}_{\P^2}(1)$. Note that $\mu(V_\bullet; x)=1$.
For integers $m,s>0$, we have 
$$
\dim \mathcal{F}_x^s V_m = \frac{(m+2)(m+1)}{2}-\frac{s(s+1)}{2}.
$$
The jumping numbers $e_j=e_j(V_m, \mathcal{F}_x)$ are given by
$$
e_{\frac{(m+2)(m+1)}{2}-\frac{k(k+1)}{2}}=\cdots=e_{\frac{(m+2)(m+1)}{2}-\frac{k(k+3)}{2}}=k
$$ 
for all $k \geq 0$ with $\frac{(m+2)(m+1)}{2}-\frac{k(k+3)}{2} \geq 1$, so the effective jumping numbers are given by 
$$
\alpha_j(V_m, \mathcal{F}_x)=m+1-j~~\text{ for $j=1, \ldots, m+1=N(V_m, \mathcal{F}_x)$.}
$$ 
We then obtain 
$$
\beta_j(V_m, \mathcal{F}_x)=\frac{(m+2)(m+1)}{2}-\frac{(m+1-j)(m+2-j)}{2}~\text{ and }~j_t(V_m, \mathcal{F}_x)=m+1-\lfloor t \rfloor.
$$
For $0 \leq t \leq 1$, the bounded mass function is given by
\begin{tiny}
$$
\mass_+ (V_{m}, \mathcal{F}_x, mt)=\left(\frac{(m+1)(m+2)}{2}-\frac{(\lfloor mt \rfloor+1)(\lfloor mt \rfloor+2)}{2}\right) \left(mt-\lfloor mt \rfloor \right) +  \sum_{k=1}^{\lfloor mt \rfloor}\left(\frac{(m+1)(m+2)}{2}-\frac{k(k+1)}{2} \right).
$$
\end{tiny}\\[-8pt]
By Theorem \ref{theorem:mass}, we obtain
$$
\widehat{\varphi}_{x}(V_\bullet, \mathcal{F}_x, t)=\lim_{m \rightarrow \infty} \frac{\mass_+ (V_{m}, \mathcal{F}_x, mt)}{m^{3}}=-\frac{1}{6}t^{3}+\frac{1}{2}t ~~\text{ for $0\leq t \leq 1$.}
$$
\end{example}


\end{document}